\newtheorem{theorem}{Theorem}[section]
\newtheorem{lemma}[theorem]{Lemma}
\newtheorem{corollary}[theorem]{Corollary}
\theoremstyle{definition}
\newtheorem{example}[theorem]{Example}
\newtheorem{remark}[theorem]{Remark}
\numberwithin{equation}{section}
\begin{document}

\setcounter{page}{1}

\title[Short Title]{projections  in  Toeplitz algebra}

\author[Hui Dan, Xuanhao Ding, Kunyu Guo, \MakeLowercase{and} Yuanqi Sang]{Hui Dan$^{1}$, Xuanhao Ding$^2$, Kunyu Guo$^3$,\MakeLowercase{and} Yuanqi Sang$^{4*}$ }

\address{$^{1}$ College of Mathematics, Sichuan University, Chengdu, Sichuan
610065, P.R. China.}
\email{\textcolor[rgb]{0.00,0.00,0.84}{danhuimath@gmail.com}}

\address{$^{2}$School of Mathematics and Statistics, Chongqing Technology and Business University,
 Chongqing 400067, P.R. China.}
\email{\textcolor[rgb]{0.00,0.00,0.84}{xuanhaod@qq.com}}

\address{$^{3}$School of Mathematical Sciences, Fudan University, Shanghai 200433, P.R. China.}
\email{\textcolor[rgb]{0.00,0.00,0.84}{kyguo@fudan.edu.cn}}

\address{$^{3}$School of Economic Mathematics, Southwestern University of Finance and Economics, Chengdu 611130,  P.R. China.}
\email{\textcolor[rgb]{0.00,0.00,0.84}{sangyq@swufe.edu.cn}}

\let\thefootnote\relax\footnote{}

\subjclass[2010]{47B35.}

\keywords{projection, Toeplitz algebra, invariant subspace}

\date{.
\newline \indent $^{*}$Corresponding author}

\begin{abstract}
Motivated by  Barr{\'\i}a-Halmos's \cite[Question 19]{barria1982asymptotic} and Halmos's \cite[Problem 237]{Halmos1978A}, we explore projections in Toeplitz algebra on the Hardy space. We show that the product of two Toeplitz (Hankel) operators is a projection if
and only if it is the projection onto one of the invariant subspaces of the shift (backward shift) operator. As a consequence one obtains new proofs  of criterion for   Toeplitz  operators and Hankel operators to be partial isometries. Furthermore, we completely characterize when the self-commutator of a Toeplitz operator is a projection. This provides a class of nontrivial projections in Toeplitz algebra.
\end{abstract} \maketitle

\section{Introduction }
Let  $\mathbb{D}$  be the open disk in the complex plane and $\mathbb{T}$  its boundary.
The Hardy space $H^{2}$ is  the subspace of $L^{2}=L^{2}(\mathbb{T})$ consisting of functions whose Fourier coefficients corresponding to  negative integers vanish.
A function $\vartheta\in H^{2}$ is
called an inner function if $|\vartheta(e^{i\theta})|=1$ a.e.

For $\varphi$ in $L^{\infty}=L^{\infty}(\mathbb{T}),$ the Toeplitz operator $T_{\varphi}$ with symbol $\varphi$ and the Hankel operator $H_{\varphi}$ with symbol $\varphi$ are defined on $H^{2}$
as the following:
\begin{align*}
T_{\varphi}f&=P(\varphi f),\\
H_{\varphi}f&=(I-P)(\varphi f),\quad f\in H^{2},
\end{align*}
where $P$ is the orthogonal projection of $L^{2}$ onto $H^{2}.$
The Toeplitz algebra $\mathfrak{T}_{L^{\infty}}$
is the $C^{*}-$algebra  generated by  $\{T_{\phi},\phi\in L^{\infty}\}.$
We say that a bounded operator $Q$ on a Hilbert space is a projection if $Q$ satisfies \[Q=Q^{*}=Q^{2}.\]

The study of projections, and  applications of such study to illuminate structure of $C^{*}-$algebras, have been an enduring theme in operator algebra.
In particular,
progresses on projections in Toeplitz algebra will
shed new light on the structure of $\mathfrak{T}_{L^{\infty}}$, for instance, compact perturbation or essential commutant problem \cite{axler1978products,davidson1977operators,gu2004operators,chen2005compact,xia2008essential}, when a Hankel operator is in
$\mathfrak{T}_{L^{\infty}}$\cite{barria1996hankel,xiaoman1990hankel}, is Ces\`{a}ro operator in $\mathfrak{T}_{L^{\infty}}$\cite{martinez2002essentially}?.etc.

In \cite[Question 19]{barria1982asymptotic}, J. Barr{\'\i}a and P. R. Halmos  raised
a problem:
\begin{center}
``Which projections belong to $\mathfrak{T}_{L^{\infty}}$?"
\end{center}
They remarked that although the question is vague, it ``might give a hint to a suitably general context in which Toeplitz theory can be embedded", and in which problems in Toeplitz theory become ``more manageable".
To better understand this problem, we first observe that if
$T$ is a finite rank diagonal operator with diagonal entries equal to 0 or 1,
then $T$ belongs to $\mathfrak{T}_{L^{\infty}},$ by the formula $I-T_{z^{n+1}}T_{\bar{z}^{n+1}}=z^{n}\otimes z^{n}(n\geq 0).$ Are there any other projections in $\mathfrak{T}_{L^{\infty}}?$
For a unital $C^{*}$-algebra, the projections $0$ and $I$ are trivial.
The purpose of the current paper is find more nontrivial projections in $\mathfrak{T}_{L^{\infty}},$ and classify them is some sense.

It is easy to see that all finite sums of finite products of Toeplitz operators form
a dense set in $\mathfrak{T}_{L^{\infty}}.$ For J. Barr{\'\i}a and P. R. Halmos' problem, we should find a condition for the operator $\sum_{i=1}^{m}\prod_{j=1}^{n}T_{\varphi_{ij}}$ to be a projection. According to the  solving process of zero product problem of Toeplitz operators
\cite{guo1996problem,gu2000products,aleman2009zero}, we think that it maybe difficult when $n$ and $m$ are large.
S. Axler made an important observation in \cite[(14)]{barria1982asymptotic}:
the projection onto a invariant subspace of $T_{z}$ belongs to $\mathfrak{T}_{L^{\infty}}$, and by Beurling's theorem, it equals $T_{\theta}T_{\bar{\theta}}$ for some inner function $\theta.$
Inspired by this,
we will initially consider that for which functions $f$ and $g$,
$T_{f}T_{g}$ is a projection?
In section 3, we find  that if
$T_{f}T_{g}$ is a projection, it must
be the projection onto a invariant subspace of $T_{z}.$
This result covers the result of A. Brown and R. Douglas in \cite{brown1965partially}.

The central role in this work is played by the following theorem (see\cite[7.11]{douglas2012banach}or\cite[Theorem 2]{Engli1995Toeplitz}):


\noindent{\bf{Symbol mapping.}}\,\,\,
Every operator in $\mathfrak{T}_{L^{\infty}}$ is of the form
\begin{align*}
T=T_{f}+S,\quad f\in L^{\infty},  S\in \mathfrak{S}
\end{align*}
where $\mathfrak{S}$ is the semicommutator ideal generated by all semicommutators $T_{fg}-T_{f}T_{g},f,g\in L^{\infty}.$

Since a Toeplitz operator is a projection if and only if it is $0$ or $I$ \cite[Corollary 5]{brown1964algebraic}.
In the view of the symbol mapping theorem and the following important formula
\begin{align}\label{HT}
T_{fg}-T_{f}T_{g}=H^{*}_{\bar{f}}H_{g},\quad
f,g\in L^{\infty},
\end{align}
in what follows we shall
consider that for which functions $f$ and $g$,
$H^{*}_{\bar{f}}H_{g}$ is a projection?

Let $\vartheta$ be a nonconstant inner function, the corresponding model space $K_{\vartheta}^{2}$ is defined to be \[K_{\vartheta}^{2}=H^{2}\ominus \vartheta H^{2}.\]
Moreover, $K_{\vartheta}^{2}$ is a nontrivial invariant subspace of $T^{*}_{z}$.
In section 4, we show that if
$H^{*}_{\bar{f}}H_{g}$ is a projection, then it must be a projection onto a model space. This result covers
the  decription of the partially isometric Hankel operators \cite[Theorem 2.6]{Peller2003Hankel}.

For an operator $T$ on a separable Hilbert space $\mathcal{H},$
the self-commutator of $T$ is define by $T^{*}T-TT^{*}.$
The study of self-commutator has attracted much interest. For example,
every self-adjoint operator on an infinite dimensional Hilbert space is the sum of
two self-commutators \cite{halmos1952commutators} and Berger-Shaw's theorem \cite{berger1973selfcommutators}, etc.
P. R. Halmos \cite[Problem 237]{Halmos1978A} asked that can $T^{*}T-TT^{*}$ be a projection,
and, if so, how? He also proved that if $T$ is an abnormal operator (i.e., operators that have no normal direct summands) and $\|T\|=1$, such that self-commutator of $T$ is a projection, then $T$ is an isometry.
It is still an interesting  question for Toeplitz operator.
Note that the self-commutator of $T_{f}$ is in   $\mathfrak{T}_{L^{\infty}}.$

In section 5,
we give the necessary and sufficient condition for the self-commutator of $T_{f}$ to be a projection when $T_{f}$ remains unrestricted.
There are several difficulties in proving this result. One is that the symbol mapping theorem is fail to get the information of symbol $f$, since the  corresponding symbol of $T^{*}_{f}T_{f}-T_{f}T^{*}_{f}$ is zero. Another is to obtain the range of $T^{*}_{f}T_{f}-T_{f}T^{*}_{f}.$
We overcome these obstacles by linking
hyponormal Toeplitz operators and truncated Toeplitz operators.

In section 6, we describe the $C^{*}-$algebra generated by $T_{u}T_{\bar{u}}$ for all inner functions $u.$
We can now state our main results.

\noindent{\bf{Theorem \ref{main1}}}
If $f,g\in L^{\infty}(\mathbb{T}),$ then the following statements
are equivalent.
\begin{enumerate}
\item $T_{f}T_{g}$ is a
nontrivial projection;

\item $T_{f}T_{g}$ is a
projection, and its range is a nontrivial invariant subspace of the shift operator $T_{z};$

\item There exist a nonconstant inner function $\theta$ and a nonzero constant $a$ such that $f=a\theta$ and $ g=\frac{\bar{\theta}}{a}$.

\end{enumerate}

\noindent{\bf{Theorem \ref{2H}}}
If $f,g\in L^{\infty}(\mathbb{T}),$ then the following statements
are equivalent.
\begin{enumerate}

\item $H^{*}_{\bar{f}}H_{g}$ is a
nontrivial projection operator;

\item The range of $H^{*}_{\bar{f}}H_{g}$ is
a model space  $K^{2}_{\theta},$ where $\theta$ is an inner function;

\item$\bar{f}+\bar{\mu}\bar{\theta},g+\frac{\bar{\theta}}{\mu}\in H^{2},$ where $\mu \in\mathbb{C}\setminus\{0\}$.
\end{enumerate}

\noindent{\bf{Theorem \ref{main3}}}
 If $\varphi\in L^{\infty}(\mathbb{T}),$ then
$T^{*}_{\varphi}T_{\varphi}-T_{\varphi}T^{*}_{\varphi}$ is a nontrivial projection operator
if and only if  one of following conditions holds
\begin{enumerate}
\item The range of $T^{*}_{\varphi}T_{\varphi}-T_{\varphi}T^{*}_{\varphi}$
    is a model space, and
    $\varphi=a\theta+b\bar{\theta}+c,$ where $\theta$ is an inner function, $a,b$ and $c$ are constant with $|a|^{2}-|b|^{2}=1;$
\item The range of $T^{*}_{\varphi}T_{\varphi}-T_{\varphi}T^{*}_{\varphi}$
    is not a model space, and
$\varphi=uv+\bar{v}+c,$ where $u$ is inner, $c$ is constant, $v\in H^{2}$ with $|v|^{2}=Re(uh+1)(h\in H^{2})$.
\end{enumerate}

\section{Self-adjointness of $T_{f}T_{g}+T_{\phi}T_{\psi}$ }

As a preparation,
we obtain a necessary and sufficient condition for self-adjointness of $T_{f}T_{g}+T_{\phi}T_{\psi}$. The main tool is finite rank operators.

Given vectors $f$ and $g$ in a separable Hilbert space $\mathcal{H},$
we define the rank-one operator $f\otimes g$ mapping $\mathcal{H}$ into itself by
\begin{align}\label{r1f}
(f\otimes g)h=\left\langle h,g\right\rangle f.
\end{align}
The following properties of rank-one
operators are  well known.

\begin{lemma}\label{rank10}
Given vectors $f$ and $g$ in a separable Hilbert space $\mathcal{H}.$
\begin{enumerate}
\item  If $f\otimes g=0$ if and only if
either $f=0$ or $g=0;$

\item   $(f\otimes g)^{*}=g\otimes f;$

\item For bounded operators $A$ and $B,$
$A(f\otimes g)B=(Af)\otimes (B^{*}g).$
\end{enumerate}
\end{lemma}

\begin{lemma}\label{rank1ss}
Given vectors $f$ and $g$ in a separable Hilbert space.
If  nonzero operator $f\otimes g$ is self-adjoint if and only if there is a nonzero real constant $\lambda$ such that $f=\lambda g.$
\end{lemma}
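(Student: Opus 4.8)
The plan is to prove both implications directly from the definition \eqref{r1f} together with the elementary properties collected in Lemma \ref{rank10}. For the easy direction, suppose $f=\lambda g$ for some nonzero real constant $\lambda$. Then $f\otimes g=(\lambda g)\otimes g=\lambda(g\otimes g)$, and since $(g\otimes g)^{*}=g\otimes g$ by Lemma \ref{rank10}(2) and $\lambda\in\mathbb{R}$, the operator $f\otimes g$ is self-adjoint; it is nonzero precisely because $\lambda\neq 0$ and $g\neq 0$.

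For the converse, assume $f\otimes g$ is nonzero and self-adjoint. By Lemma \ref{rank10}(1) both $f$ and $g$ are nonzero, and by Lemma \ref{rank10}(2) the self-adjointness condition reads $f\otimes g=(f\otimes g)^{*}=g\otimes f$. The key step is to test this operator identity against the vector $g$: using \eqref{r1f} we obtain
\[
\|g\|^{2}f=\langle g,g\rangle f=(f\otimes g)g=(g\otimes f)g=\langle g,f\rangle g.
\]
Since $g\neq 0$ we may set $\lambda=\langle g,f\rangle/\|g\|^{2}$, so that $f=\lambda g$, and $\lambda\neq 0$ because $f\neq 0$.

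It remains to verify that $\lambda$ is real, which is the only point requiring genuine care. Substituting $f=\lambda g$ back into the defining expression for $\lambda$ gives $\lambda\|g\|^{2}=\langle g,f\rangle=\langle g,\lambda g\rangle=\bar{\lambda}\|g\|^{2}$, whence $\lambda=\bar{\lambda}$ and thus $\lambda\in\mathbb{R}$. This completes the argument. The main obstacle here is not computational but bookkeeping: one must track the conjugate-linearity in the second slot of the inner product appearing in \eqref{r1f}, since it is exactly this that forces the scalar $\lambda$ to equal its own conjugate and hence to be real rather than merely complex.
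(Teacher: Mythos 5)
Your proof is correct and takes essentially the same route as the paper's: apply the identity $f\otimes g=(f\otimes g)^{*}=g\otimes f$ to the vector $g$ to get $f=\lambda g$ with $\lambda=\langle g,f\rangle/\|g\|^{2}$, note $\lambda\neq 0$ since $f\neq 0$, and then verify $\lambda=\bar{\lambda}$. The only cosmetic difference is that you obtain realness from the scalar identity $\lambda\|g\|^{2}=\langle g,\lambda g\rangle=\bar{\lambda}\|g\|^{2}$, whereas the paper substitutes $f=\lambda g$ back into the operator identity to get $\lambda\, g\otimes g=\bar{\lambda}\, g\otimes g$; both are one-line checks resting on the conjugate-linearity of the second slot.
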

\begin{proof}
Assume that $f\otimes g$ is self-adjoint, we have
$f\otimes g=g\otimes f,$ and therefore
\begin{align*}
(f\otimes g)g&=(g\otimes f)g,\\
\left\langle g,g\right\rangle f&=\left\langle g,f\right\rangle g,\\
f&=\frac{\left\langle g,f\right\rangle}{\left\langle g,g\right\rangle}g.
\end{align*}
If $\left\langle g,f\right\rangle=0,$ then $f$ is the zero vector. By Lemma \ref{rank10}(1),  this contradict that $f\otimes g$ is a nonzero operator.
Let $\lambda=\frac{\left\langle g,f\right\rangle}{\left\langle g,g\right\rangle}\neq0.$
Substituting $f=\lambda g$ into $f\otimes g=g\otimes f,$
\begin{align}\label{converse}
\lambda g\otimes g=\bar{\lambda} g\otimes g.
\end{align}
Hence, $\lambda$ is a nonzero real number.
The converse follows easily from \eqref{converse}.
\end{proof}

\begin{lemma}\label{rank20}
Given vectors $f,g,\phi$ and $\psi$ in a separable Hilbert space.
If operator $f\otimes g+\phi\otimes\psi$  is zero if and only if
one of following statement hold
\begin{enumerate}
\item either $f$ or $g$ is the zero vector and either $\phi$ or $\psi$ is the zero vector;

\item $f,g,\phi$ and $\psi$ are all nonzero vectors,  $f=\lambda \phi$ and $\psi=-\bar{\lambda} g,\lambda $ is a nonzero constant.
\end{enumerate}
\end{lemma}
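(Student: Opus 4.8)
The plan is to prove both implications, treating the easy sufficiency direction by a direct computation and the necessity direction by a case analysis. For sufficiency, in case (1) each summand already vanishes by Lemma \ref{rank10}(1), so the sum is zero; in case (2) I would substitute $f=\lambda\phi$ and $\psi=-\bar{\lambda}g$ and use the sesquilinearity recorded in Lemma \ref{rank10} — specifically that $\lambda(\phi\otimes g)=\phi\otimes(\bar{\lambda}g)$ — to check that $f\otimes g+\phi\otimes\psi=\lambda(\phi\otimes g)-\lambda(\phi\otimes g)=0$.

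For necessity, assume $f\otimes g+\phi\otimes\psi=0$. First I would dispose of the degenerate case: if either summand is the zero operator, then so is the other, and Lemma \ref{rank10}(1) immediately yields case (1). Thus the substantive case is when both $f\otimes g\neq 0$ and $\phi\otimes\psi\neq 0$, which by Lemma \ref{rank10}(1) forces all four vectors to be nonzero. Rewriting the hypothesis as $f\otimes g=-\phi\otimes\psi$ and evaluating both sides at the vector $\psi$ gives $\langle\psi,g\rangle f=-\|\psi\|^{2}\phi$. Since $\phi\neq 0$ and $\|\psi\|^{2}\neq 0$, the coefficient $\langle\psi,g\rangle$ cannot vanish, so $f=\lambda\phi$ with $\lambda=-\|\psi\|^{2}/\langle\psi,g\rangle$ a nonzero constant. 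Substituting $f=\lambda\phi$ back into the identity and again using $\lambda(\phi\otimes g)=\phi\otimes(\bar{\lambda}g)$, I collapse the two terms into $\phi\otimes(\bar{\lambda}g+\psi)=0$; since $\phi\neq 0$, Lemma \ref{rank10}(1) forces $\psi=-\bar{\lambda}g$, which is exactly case (2).

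The argument is essentially routine linear algebra, so I do not expect a serious obstacle; the only points requiring care are bookkeeping ones. I must verify that the scalar $\langle\psi,g\rangle$ is nonzero before dividing by it, which is precisely where the nondegeneracy of the case is used, and I must keep track of the conjugations when moving scalars across the $\otimes$, since $\langle h,cg\rangle=\bar{c}\langle h,g\rangle$. Getting these straight is what produces the conjugate $\bar{\lambda}$ in the relation $\psi=-\bar{\lambda}g$ rather than $\lambda$ itself.
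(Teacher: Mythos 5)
Your proof is correct and follows essentially the same route as the paper's: evaluate the identity $f\otimes g=-\phi\otimes\psi$ at a fixed vector to obtain $f=\lambda\phi$, then collapse the sum to $\phi\otimes(\bar{\lambda}g+\psi)=0$ and invoke Lemma \ref{rank10}(1). The only (immaterial) difference is that you test against $\psi$ where the paper tests against $g$, and your handling of the conjugation across the tensor product is correct.
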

\begin{proof}
If one of four vectors $f,g,\phi$ and $\psi$ is zero, it is easy
to see  condition (1) hold, by Lemma \ref{rank10}(1).

Suppose that $f,g,\phi$ and $\psi$ are all nonzero vectors and
$f\otimes g=-\phi\otimes\psi,$ we have
\begin{align*}
(f\otimes g)g&=-(\phi\otimes\psi)g\\
\left\langle g,g\right\rangle f&=-\left\langle g,\psi\right\rangle \phi\\
f&=-\frac{\left\langle g,\psi\right\rangle}{\left\langle g,g\right\rangle}\phi.
\end{align*}
Let $\lambda=-\frac{\left\langle g,\psi\right\rangle}{\left\langle g,g\right\rangle},$
since $f$ is a nonzero vector, $\lambda\neq0.$
Write $f=\lambda\phi,$
we have,
\begin{align*}
f\otimes g+\phi\otimes\psi
&=\lambda\phi\otimes g+\phi\otimes\psi\\
&=\phi\otimes (\bar{\lambda}g+\psi)=0.
\end{align*}
Since $\phi$ is a nonzero vector and Lemma \ref{rank10}, $\bar{\lambda}g+\psi=0.$
It is easy to check that the converse is true.
\end{proof}
\begin{lemma}\label{rank2linear}
Given vectors $f,g,\phi$ and $\psi$ in a separable Hilbert space $\mathcal{H}.$
If $f\otimes g+\phi\otimes\psi$ is
self-adjoint, then
$\{f,g\}$ is linearly dependent  if and only if
$\{\phi,\psi\}$ is linearly dependent.
\end{lemma}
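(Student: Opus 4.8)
The plan is to translate the self-adjointness hypothesis into a single operator identity and then read off linear dependence by a rank count. Writing $T=f\otimes g+\phi\otimes\psi$ and applying Lemma \ref{rank10}(2), the condition $T=T^{*}$ becomes
\begin{align}\label{sa-id}
f\otimes g+\phi\otimes\psi=g\otimes f+\psi\otimes\phi.
\end{align}
Both the hypothesis and the desired conclusion are invariant under interchanging the pairs $(f,g)$ and $(\phi,\psi)$, so it suffices to prove one implication: if $\{f,g\}$ is linearly dependent, then $\{\phi,\psi\}$ is linearly dependent. The reverse implication follows verbatim after the swap, which yields the stated equivalence.

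I would then assume $\{f,g\}$ is linearly dependent and split into two cases. First, if $f=0$ or $g=0$, then $f\otimes g=0$ by Lemma \ref{rank10}(1), and \eqref{sa-id} collapses to $\phi\otimes\psi=\psi\otimes\phi$, i.e.\ $\phi\otimes\psi$ is self-adjoint; if $\phi\otimes\psi=0$ then one of $\phi,\psi$ vanishes, while if $\phi\otimes\psi\neq0$ then Lemma \ref{rank1ss} gives $\phi=\lambda\psi$ for a nonzero real $\lambda$, so $\{\phi,\psi\}$ is dependent in either event. Second, if $f$ and $g$ are both nonzero, I would write $g=cf$ with $c\neq0$; a direct computation from the definition \eqref{r1f} gives $f\otimes g=\bar{c}\,(f\otimes f)$ and $g\otimes f=c\,(f\otimes f)$, so \eqref{sa-id} rearranges to
\begin{align}\label{skew-id}
\phi\otimes\psi-\psi\otimes\phi=(c-\bar{c})\,(f\otimes f),
\end{align}
whose right-hand side has rank at most one.

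To close the second case I would argue by contradiction, supposing $\{\phi,\psi\}$ is linearly independent. Then the left-hand side of \eqref{skew-id} has rank exactly two: its range lies in $V=\mathrm{span}\{\phi,\psi\}$, and its kernel is precisely $V^{\perp}$, since $\langle h,\psi\rangle\phi-\langle h,\phi\rangle\psi=0$ forces $\langle h,\psi\rangle=\langle h,\phi\rangle=0$ when $\phi,\psi$ are independent; hence the operator is injective on $V$ with range all of $V$. This contradicts the rank bound in \eqref{skew-id}, so $\{\phi,\psi\}$ must be dependent. The main obstacle is exactly this rank bookkeeping: one must check that independence of $\{\phi,\psi\}$ genuinely forces rank two—note that a nonzero skew-adjoint rank-one operator does exist (e.g.\ $i\,v\otimes v$), so the conclusion truly rests on independence and not on skew-adjointness alone—and that every degenerate subcase, namely vanishing vectors or $c$ real (making the right-hand side of \eqref{skew-id} zero), is absorbed into the dependence conclusion rather than overlooked.
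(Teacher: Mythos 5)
Your proof is correct and follows essentially the same route as the paper's: both substitute the dependence relation into the identity $f\otimes g+\phi\otimes\psi=g\otimes f+\psi\otimes\phi$ (after disposing of the zero-vector case via Lemma \ref{rank1ss}) to reduce matters to the fact that $\phi\otimes\psi-\psi\otimes\phi$ equals a scalar multiple of a rank-one operator. The only real difference is the endgame: the paper splits into $\lambda\in\mathbb{R}$ (where Lemma \ref{rank1ss} applies to $\phi\otimes\psi$) and $\lambda\notin\mathbb{R}$ (where it evaluates the operator identity at Gram--Schmidt dual vectors to force $\phi,\psi\in\mathrm{span}\{g\}$), whereas your single rank count --- rank two on the left if $\{\phi,\psi\}$ were independent versus rank at most one on the right --- handles both cases uniformly and is, if anything, slightly cleaner.
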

\begin{proof}
If one of $\{f,g,\phi,\psi\}$ is a nonzero vector, by Lemma \ref{rank1ss},  $\{f,g\}$  and
$\{\phi,\psi\}$ are both linearly dependent.

Assume that $f,g,\phi$ and $\psi$  are four nonzero vectors
and  $\{f,g\}$ is linearly dependent,  then  there exist a nonzero  constant $\lambda,$ such that
\begin{align}\label{fgl}
f=\lambda g.
\end{align}
Since $f\otimes g+\phi\otimes\psi$ is
self-adjoint,
\begin{align}\label{fgs}
f\otimes g+\phi\otimes\psi=g\otimes f+\psi\otimes\phi,
\end{align}
Substituting \eqref{fgl} into  \eqref{fgs}, we have
\begin{align}\label{lambdab}
(\lambda-\bar{\lambda})g\otimes g=\psi\otimes\phi-\phi\otimes\psi.
\end{align}
If $\lambda$ is real, Lemma \ref{rank10}(2) implies $\psi\otimes\phi$ is self-adjoint,
by Lemma \ref{rank1ss}, we have $\{\phi,\psi\}$ is linearly dependent.

When $\lambda\neq\bar{\lambda},$
assume that $\{\phi,\psi\}$ is linearly independent, by Gram-Schmidt procedure, there exist  two  nonzero vectors $x$  and $y$ such that
\begin{align*}
\langle x,\phi\rangle=1,
\langle x,\psi\rangle&=0,\\
\langle y,\psi\rangle=1,
\langle y,\phi\rangle&=0.
\end{align*}
Applying operator equation \eqref{lambdab} to $x$ and $y$ give
\begin{align*}
(\lambda-\bar{\lambda})\langle x,g\rangle g=&\psi,\\
(\lambda-\bar{\lambda})\langle y,g\rangle g=&-\phi.
\end{align*}
Since  $\phi$ and $\psi$  are nonzero vectors,
\begin{align*}
(\lambda-\bar{\lambda})\langle x,g\rangle &\neq 0,\\
(\lambda-\bar{\lambda})\langle y,g\rangle &\neq 0.
\end{align*}
This contradicts our assumption ($\{\phi,\psi\}$ is linearly independent). The rest of proof is the same as the above reasoning.
\end{proof}
\begin{lemma}\label{rank2s}
Given nonzero vectors $f,g,\phi$ and $\psi$ in a separable Hilbert space.
$f\otimes g+\phi\otimes\psi$ is a nonzero
self-adjoint operator if and only if
one of following statement holds
\begin{enumerate}
\item $f=\lambda g$ and $\phi=\mu \psi,$  where $\lambda,\mu\in\mathbb{R}\setminus\{0\};$
\item $f=\lambda g,\phi=\mu \psi,$ and $\psi=-ag,$ where $\lambda,\mu,a\in\mathbb{C}\setminus\{0\},Im(\lambda)\neq 0,Im(\mu)\neq 0,$ $|a|^{2}\frac{Im(\mu)}{Im(\lambda)}=-1.$
\item Both $\{f,g\}$ and $\{\phi,\psi\}$ are linearly independent,
\begin{align*}
\phi&=a_{11} f+a_{12}g\\
\psi&=a_{21} f+a_{22}g,
\end{align*}
where $a_{11},a_{12},a_{21},a_{22}\in\mathbb{C}.$
$a_{11}\bar{a}_{21},\bar{a}_{12}a_{22}\in \mathbb{R},$
$\bar{a}_{12}a_{21}-a_{11}\bar{a}_{22}=1.$
\end{enumerate}
\end{lemma}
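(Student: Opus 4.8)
The plan is to convert the self-adjointness of $f\otimes g+\phi\otimes\psi$ into the operator identity
\begin{equation*}
f\otimes g+\phi\otimes\psi=g\otimes f+\psi\otimes\phi,
\end{equation*}
using Lemma \ref{rank10}(2), and then to invoke Lemma \ref{rank2linear}, which guarantees that $\{f,g\}$ is linearly dependent precisely when $\{\phi,\psi\}$ is. This dichotomy splits the argument into the case where both pairs are dependent (producing (1) and (2)) and the case where both are independent (producing (3)).

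In the dependent case I would write $f=\lambda g$ and $\phi=\mu\psi$ with $\lambda,\mu\neq0$ and substitute into the identity above. Since $(g\otimes g)^{*}=g\otimes g$ and $(\psi\otimes\psi)^{*}=\psi\otimes\psi$, this collapses to
\begin{equation*}
(\lambda-\bar\lambda)\,g\otimes g+(\mu-\bar\mu)\,\psi\otimes\psi=0,
\end{equation*}
that is, $Im(\lambda)\,g\otimes g=-Im(\mu)\,\psi\otimes\psi$. If $Im(\lambda)=0$ then, since $\psi\neq0$ forces $\psi\otimes\psi\neq0$, we must have $Im(\mu)=0$, so both $\lambda,\mu$ are real and we land in (1). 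If $Im(\lambda)\neq0$, the displayed relation shows $\psi\otimes\psi$ is a scalar multiple of $g\otimes g$; comparing these two positive rank-one operators forces $\psi=-ag$ for some $a\neq0$, and matching the scalars yields $|a|^{2}Im(\mu)/Im(\lambda)=-1$, which is (2).

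In the independent case the key step, and the main obstacle, is to show that $\phi$ and $\psi$ already lie in $\operatorname{span}\{f,g\}$. I would argue through the ranges: $\operatorname{Ran}(f\otimes g+\phi\otimes\psi)\subseteq\operatorname{span}\{f,\phi\}$ while $\operatorname{Ran}(g\otimes f+\psi\otimes\phi)\subseteq\operatorname{span}\{g,\psi\}$, and self-adjointness equates these two ranges. When the operator has rank two these spans coincide with the common range, and since that $2$-dimensional space contains the independent pair $f,g$ it must equal $\operatorname{span}\{f,g\}$; the degenerate rank-one situations (when $\{f,\phi\}$ or $\{g,\psi\}$ happens to be dependent) reduce the operator to a single rank-one self-adjoint operator, to which Lemma \ref{rank1ss} applies and again places $\phi,\psi$ in $\operatorname{span}\{f,g\}$. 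Writing $\phi=a_{11}f+a_{12}g$ and $\psi=a_{21}f+a_{22}g$ and substituting into the identity, I would then expand both sides in terms of $f\otimes f,\ f\otimes g,\ g\otimes f,\ g\otimes g$. These four operators are linearly independent because $\{f,g\}$ is, so I may match coefficients termwise: the $f\otimes f$ term gives $a_{11}\bar a_{21}\in\mathbb{R}$, the $g\otimes g$ term gives $\bar a_{12}a_{22}\in\mathbb{R}$, and the $f\otimes g$ term gives $\bar a_{12}a_{21}-a_{11}\bar a_{22}=1$ (the $g\otimes f$ term only repeats the complex conjugate of this last equation), which is exactly (3).

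Finally, the converse implications are handled by reversing these computations: substituting each of (1), (2), (3) back into $f\otimes g+\phi\otimes\psi$ and using Lemma \ref{rank10}(2) shows directly that the result is self-adjoint. I expect the genuine difficulty to be concentrated entirely in the independent case, specifically in the bookkeeping that pins $\phi,\psi$ to $\operatorname{span}\{f,g\}$ and in verifying the linear independence of the four elementary tensors that makes the coefficient comparison legitimate.
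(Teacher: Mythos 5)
Your proposal is correct and follows the same overall architecture as the paper's proof: the dichotomy supplied by Lemma \ref{rank2linear}, the substitution $f=\lambda g$, $\phi=\mu\psi$ in the dependent case leading to $(\lambda-\bar{\lambda})g\otimes g+(\mu-\bar{\mu})\psi\otimes\psi=0$ and hence to conditions (1) and (2), and, in the independent case, placing $\phi,\psi$ in $\operatorname{span}\{f,g\}$ and matching coefficients. The one step you handle genuinely differently is that span-containment: the paper uses Gram--Schmidt to produce $x,y$ with $\langle x,\phi\rangle=1$, $\langle x,\psi\rangle=0$, $\langle y,\psi\rangle=1$, $\langle y,\phi\rangle=0$, and applies the operator identity to $x$ and $y$ to read off $\phi=-\langle y,g\rangle f+\langle y,f\rangle g$ and $\psi=\langle x,g\rangle f-\langle x,f\rangle g$ in one stroke; your range comparison reaches the same conclusion but obliges you to split off the rank-one degeneracies where $\{f,\phi\}$ or $\{g,\psi\}$ is dependent (your reduction of those to Lemma \ref{rank1ss} is sound, just extra casework). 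Your coefficient comparison against the four linearly independent elementary tensors $f\otimes f$, $f\otimes g$, $g\otimes f$, $g\otimes g$ is a clean substitute for the paper's grouping followed by Lemma \ref{rank20}, and yields exactly the relations in (3). One caveat you share with the paper: the converse as sketched only establishes self-adjointness, and under condition (1) the operator can still vanish (take $f=g=\psi=-\phi$, $\lambda=1$, $\mu=-1$), so the nonvanishing claimed in the equivalence needs a small additional check in either write-up.
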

\begin{proof}
By Lemma \ref{rank2linear},
there are two cases to consider.

\noindent{\bf{Case I}}

 Assume that $\{f,g\}$ and $\{\phi,\psi\}$ are both linearly dependent, there are two nonzero constants
$\lambda$ and $\mu$ such that
\begin{align}\label{lm}
f=\lambda g,\quad \phi=\mu \psi.
\end{align}
Since $f\otimes g+\phi\otimes\psi$  is self-adjoint,
\begin{align}\label{ss}
f\otimes g+\phi\otimes\psi=g\otimes f+\psi\otimes\phi
\end{align}
Substituting \eqref{lm} into  \eqref{ss}, we have
\begin{align}\label{gpsi}
(\lambda-\bar{\lambda})g\otimes g=(\bar{\mu}-\mu)\psi\otimes\psi.
\end{align}
This means that
$\lambda=\bar{\lambda}$ if and only if $\bar{\mu}=\mu.$
If $Im(\lambda)=Im(\mu)=0,$  then
\begin{align*}
f\otimes g+\phi\otimes\psi=\lambda g\otimes g+\mu\phi\otimes\phi,
\end{align*}
and $f\otimes g+\phi\otimes\psi$ is a self-adjoint operator.
If  $Im(\lambda)$ and $Im(\mu)$ both are non zero,
\eqref{gpsi} becomes
\begin{align*}
g\otimes g+\frac{Im(\mu)}{Im(\lambda)}\psi\otimes\psi=0.
\end{align*}
By Lemma \ref{rank20}, we have
\begin{align*}
g=\bar{a}\frac{Im(\mu)}{Im(\lambda)}\psi,\quad \psi=-ag,\quad a\in \mathbb{C}\setminus\{0\},
\end{align*}
and $|a|^{2}\frac{Im(\mu)}{Im(\lambda)}=-1.$

\noindent{\bf{Case II}}

If  both $\{f,g\}$ and $\{\phi,\psi\}$ are linearly independent,
by Gram-Schmidt procedure, there exist  two  nonzero vectors $x$  and $y$ such that
\begin{align*}
\langle y,\psi\rangle=1,
\langle y,\phi\rangle&=0,\\
\langle x,\phi\rangle=1,
\langle x,\psi\rangle&=0.
\end{align*}
Applying operator equation \eqref{ss} to $x$ and $y$ give
\begin{align*}
\phi&=-\langle y,g\rangle f+\langle y,f\rangle g, \\
\psi&=\langle x,g\rangle f-\langle x,f\rangle g.
\end{align*}
Let $a_{11}=-\langle y,g\rangle,a_{12}=\langle y,f\rangle,$
$a_{21}=\langle x,g\rangle$ and $a_{22}=-\langle x,f\rangle.$ Write
\begin{equation}{\label{2.3}}
\begin{split}
\binom{\phi}{\psi}
&=\begin{pmatrix}a_{11}& a_{12}\\
   a_{21} &   a_{22}  \end{pmatrix}\binom{f}{g}.
\end{split}
\end{equation}
Substituting \eqref{2.3} into  \eqref{ss}, we have
\begin{align*}
&f\otimes g+(a_{11}f+a_{12}g)\otimes(a_{21} f+a_{22}g)\\
=&g \otimes f+(a_{21} f+a_{22}g)\otimes(a_{11}f+a_{12}g).
\end{align*}
After simplifying we get
\begin{align*}
&\big((a_{11}\bar{a}_{21}-\bar{a}_{11}a_{21})f
+(a_{12}\bar{a}_{21}-\bar{a}_{11}a_{22}-1)g\big)\otimes f\\
=&\big((\bar{a}_{12}a_{22}-a_{12}\bar{a}_{22})g
+(\bar{a}_{12}a_{21}-a_{11}\bar{a}_{22}-1)f\big)\otimes g.
\end{align*}
Since $\{f,g\}$ is linearly independent and Lemma \ref{rank20},
\begin{align*}
a_{11}\bar{a}_{21}-\bar{a}_{11}a_{21}=&0,\\
\bar{a}_{12}a_{22}-a_{12}\bar{a}_{22}=&0,\\
\bar{a}_{12}a_{21}-a_{11}\bar{a}_{22}=&1.
\end{align*}
The converse follows immediately from the above reasoning.
\end{proof}
Define an operator $V$ on $L^{2}$ by
\[Vf(w)=\overline{w}\overline{f(w)}\]
for $f\in L^{2}.$ It is easy to check that $V$ is anti-unitary. The operator $V$ satisfies
the following properties \cite[Lemma 2.1]{xia1997products}:
\begin{align*}
V^{2}&=I, \\
V P V&=(I-P),\\
V H_{f} V&=H_{f}^{*}.
\end{align*}

\begin{lemma}\label{rank1s}
If $f$ and $g$ are in $L^{\infty},$ then
\begin{align*}
T_{\bar{z}}T_{f}T_{g}T_{z}-T_{f}T_{g}
=(VH_{\bar{f}}1)\otimes(VH_{g}1).
\end{align*}
\end{lemma}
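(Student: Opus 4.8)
The plan is to strip the two outer shifts off using the semicommutator formula \eqref{HT} and then to recognize the leftover as a rank-one defect operator. Since $zH^{2}\subseteq H^{2}$ we have $H_{z}=0$, so \eqref{HT} gives $T_{\bar z}T_{f}=T_{\bar z f}$ and $T_{g}T_{z}=T_{gz}$, whence $T_{\bar z}T_{f}T_{g}T_{z}=T_{\bar z f}T_{gz}$. Applying \eqref{HT} once more to each product, and using $(\bar z f)(gz)=fg$, I get $T_{f}T_{g}=T_{fg}-H^{*}_{\bar f}H_{g}$ and $T_{\bar z f}T_{gz}=T_{fg}-H^{*}_{z\bar f}H_{zg}$. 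The Toeplitz part $T_{fg}$ cancels, leaving
\begin{align*}
T_{\bar z}T_{f}T_{g}T_{z}-T_{f}T_{g}=H^{*}_{\bar f}H_{g}-H^{*}_{z\bar f}H_{zg}.
\end{align*}

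Next I would record the elementary shift behaviour of Hankel operators. Writing $M_{z},M_{\bar z}$ for multiplication by $z,\bar z$ on $L^{2}$, the identity $(I-P)(z\,gh)=(I-P)\big(z\,(I-P)(gh)\big)$ (valid because $zP(gh)\in H^{2}$) gives $H_{zg}=(I-P)M_{z}H_{g}$, and likewise $H_{z\bar f}=(I-P)M_{z}H_{\bar f}$, so $H^{*}_{z\bar f}=H^{*}_{\bar f}M_{\bar z}(I-P)$. Using $(I-P)^{2}=I-P$ and $M_{\bar z}M_{z}=I$,
\begin{align*}
H^{*}_{z\bar f}H_{zg}=H^{*}_{\bar f}M_{\bar z}(I-P)M_{z}H_{g}=H^{*}_{\bar f}\big(I-M_{\bar z}PM_{z}\big)H_{g},
\end{align*}
and therefore $T_{\bar z}T_{f}T_{g}T_{z}-T_{f}T_{g}=H^{*}_{\bar f}\,M_{\bar z}PM_{z}\,H_{g}$.

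The heart of the argument is that $M_{\bar z}PM_{z}$ is rank one on $(H^{2})^{\perp}$, which is exactly where $H_{g}$ takes its values. For $u\in(H^{2})^{\perp}$ the only nonnegative Fourier mode of $zu$ is the constant term $\langle u,\bar z\rangle$, so $PM_{z}u=\langle u,\bar z\rangle 1$ and hence $M_{\bar z}PM_{z}u=\langle u,\bar z\rangle\bar z$; that is, $M_{\bar z}PM_{z}=\bar z\otimes\bar z$ on $(H^{2})^{\perp}$. Substituting this and invoking Lemma \ref{rank10}(3) gives
\begin{align*}
T_{\bar z}T_{f}T_{g}T_{z}-T_{f}T_{g}=H^{*}_{\bar f}(\bar z\otimes\bar z)H_{g}=(H^{*}_{\bar f}\bar z)\otimes(H^{*}_{g}\bar z).
\end{align*}
To finish I would pass to the prescribed $V$-form: since $V\bar z=1$ and $H^{*}_{\varphi}=VH_{\varphi}V$, we have $H^{*}_{\varphi}\bar z=VH_{\varphi}(V\bar z)=VH_{\varphi}1$, so the right-hand side is precisely $(VH_{\bar f}1)\otimes(VH_{g}1)$, as claimed.

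I expect the main obstacle to be the bookkeeping in the middle step: correctly handling the adjoints and the domains of the Hankel operators so that $M_{\bar z}PM_{z}$ is applied exactly on $(H^{2})^{\perp}$, and cleanly isolating the rank-one defect $M_{\bar z}PM_{z}=\bar z\otimes\bar z$. Matching this defect to the required expression through $V\bar z=1$ and $VH_{\varphi}V=H^{*}_{\varphi}$ is the single place where the specific properties of the anti-unitary $V$ are genuinely used.
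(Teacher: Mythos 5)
Your proof is correct, but it takes a different route from the paper's. The paper inserts the rank-one defect of the shift on $H^{2}$ directly into the Toeplitz product: writing $I=1\otimes 1+T_{z}T_{\bar z}$ between $T_{f}$ and $T_{g}$, using $T_{\bar z}T_{f}T_{z}T_{\bar z}T_{g}T_{z}=T_{f}T_{g}$, and then identifying $T_{\bar z f}1=VH_{\bar f}1$; the whole argument is a three-line computation. You instead pass through the Hankel calculus: you reduce $T_{\bar z}T_{f}T_{g}T_{z}-T_{f}T_{g}$ to $H^{*}_{\bar f}H_{g}-H^{*}_{z\bar f}H_{zg}$ via \eqref{HT}, factor the shifted Hankel operators as $H_{zg}=(I-P)M_{z}H_{g}$, and locate the defect in the dual identity $M_{\bar z}PM_{z}=\bar z\otimes\bar z$ on $(H^{2})^{\perp}$, which is the $V$-conjugate of the paper's $I-T_{z}T_{\bar z}=1\otimes 1$. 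All the intermediate steps check out (including the adjoint bookkeeping and the final conversion $H^{*}_{\varphi}\bar z=VH_{\varphi}1$), so the argument is complete. What the paper's version buys is brevity and the immediate appearance of the vectors $VH_{\bar f}1$, $VH_{g}1$; what yours buys is that it makes explicit why the expression depends only on the Hankel parts of $f$ and $g$, and it isolates the intertwining relations $H_{zg}=S_{z}H_{g}$ that are reused elsewhere in this circle of ideas.
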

\begin{proof}
By the following identity:
\begin{align*}
I-T_{z}T_{\bar{z}}=1 \otimes 1,
\end{align*}
we have
\begin{align*}
T_{\bar{z}}T_{f}T_{g}T_{z}=&T_{\bar{z}}T_{f}(1 \otimes 1+T_{z}T_{\bar{z}})T_{g}T_{z}\\
=&T_{\bar{z}}T_{f}(1 \otimes 1)T_{g}T_{z}+T_{\bar{z}}T_{f}T_{z}T_{\bar{z}}T_{g}T_{z}\\
=&T_{\bar{z}}T_{f}(1 \otimes 1)T_{g}T_{z}+T_{f}T_{g}\\
=&(T_{\bar{z}f}1)\otimes(T_{\bar{z}\bar{g}}1)+T_{f}T_{g}.
\end{align*}
On the other hand, one easily verifies that
\[T_{\bar{z}f} 1=P \bar{z}f1=PV\bar{f}=V P_{-}\bar{f}=V H_{\bar{f}}1,\]
Thus,
\begin{align*}
T_{\bar{z}}T_{f}T_{g}T_{z}-T_{f}T_{g}
=(VH_{\bar{f}}1)\otimes(VH_{g}1).
\end{align*}
\end{proof}
Next, we present a proof of the result of K. Stroethoff \cite[Theorem 4.4]{Stroethoff1999Algebraic}.
\begin{lemma}\label{1T}
If $f,g,\phi$ and $\psi$ are in $L^{\infty}(\mathbb{T})$, then $T_{f}T_{g}+T_{\phi}T_{\psi}$ is a Toeplitz operator if and only if
\[(VH_{\bar{f}}1)\otimes(VH_{g}1)
+(VH_{\bar{\phi}}1)\otimes(VH_{\psi}1)=0\]
if and only if
one of the following cases holds:
\begin{enumerate}
\item  either $\bar{f}$ or $g$ is analytic and either $\bar{\phi}$ or $\psi$ is analytic;

\item  $f-\lambda\phi \in \overline{H^{2}},\psi+\lambda g\in H^{2},$ where $\lambda \in\mathbb{C}\setminus\{0\}$.

\end{enumerate}
In this case, $T_{f}T_{g}+T_{\phi}T_{\psi}=T_{fg+\phi\psi}.$
\end{lemma}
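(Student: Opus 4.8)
The plan is to prove the chain of equivalences (A)$\Rightarrow$(B)$\Rightarrow$(C) by first reducing Toeplitzness to a rank-two operator equation, then solving that equation with the algebraic lemmas already in hand, and finally pinning down the symbol using the semicommutator formula \eqref{HT}. Throughout, write $X=T_fT_g+T_\phi T_\psi$.

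First I would establish the equivalence of (A) and (B). By the classical Brown--Halmos criterion, a bounded operator $X$ on $H^2$ is a Toeplitz operator if and only if $T_{\bar z}XT_z=X$. Applying Lemma \ref{rank1s} to each of the two summands of $X$ gives
\[T_{\bar z}XT_z-X=(VH_{\bar f}1)\otimes(VH_g1)+(VH_{\bar\phi}1)\otimes(VH_\psi1),\]
so $X$ is Toeplitz exactly when the right-hand side vanishes, which is precisely condition (B). No case analysis is needed here.

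Next I would prove the equivalence of (B) and (C) by feeding the four vectors $VH_{\bar f}1,\,VH_g1,\,VH_{\bar\phi}1,\,VH_\psi1$ into Lemma \ref{rank20}. The translation key is that, since $V$ is an injective anti-unitary map and $H_\alpha 1=(I-P)\alpha$, one has $VH_{\bar f}1=0$ if and only if $\bar f\in H^2$, and similarly for the other three vectors; this turns alternative (1) of Lemma \ref{rank20} into alternative (1) here. For alternative (2) of Lemma \ref{rank20} I would take the relations $VH_{\bar f}1=\lambda\,VH_{\bar\phi}1$ and $VH_\psi1=-\bar\lambda\,VH_g1$ and apply $V$ to both sides; because $V$ is conjugate-linear and $V^2=I$, the scalars are converted and I obtain $H_{\bar f}1=\bar\lambda H_{\bar\phi}1$ and $H_\psi1=-\lambda H_g1$, that is $(I-P)(\bar f-\bar\lambda\bar\phi)=0$ and $(I-P)(\psi+\lambda g)=0$, which are exactly $f-\lambda\phi\in\overline{H^2}$ and $\psi+\lambda g\in H^2$. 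The bookkeeping of this scalar conversion is the step where a conjugation or sign slip is easiest to make, so I would carry it out carefully.

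Finally, to identify the symbol I would start from \eqref{HT}, which yields the identity $X=T_{fg+\phi\psi}-(H^*_{\bar f}H_g+H^*_{\bar\phi}H_\psi)$, valid for all bounded symbols. It then suffices to show $H^*_{\bar f}H_g+H^*_{\bar\phi}H_\psi=0$. Since a Hankel operator depends only on the anti-analytic part of its bounded symbol, in case (1) at least one factor in each product already vanishes, while in case (2) the memberships force $H_{\bar f}=\bar\lambda H_{\bar\phi}$ (hence $H^*_{\bar f}=\lambda H^*_{\bar\phi}$) and $H_\psi=-\lambda H_g$, so the two products cancel; thus $X=T_{fg+\phi\psi}$. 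The main obstacle I anticipate is precisely this last identification: condition (B) only forces the \emph{rank-one defect operators} to cancel, so an extra argument is needed to upgrade ``$X$ is Toeplitz'' to ``$X$ has symbol $fg+\phi\psi$''. This can be supplied either by the case analysis above, or more conceptually by the well-definedness of the symbol map, under which the semicommutator ideal meets the Toeplitz operators only in $0$, forcing the Toeplitz operator $T_{fg+\phi\psi}-X=H^*_{\bar f}H_g+H^*_{\bar\phi}H_\psi$ to be zero.
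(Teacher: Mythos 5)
Your proof is correct and follows essentially the same route as the paper's: the Brown--Halmos criterion together with Lemma \ref{rank1s} reduces Toeplitzness to the vanishing of the rank-two operator, Lemma \ref{rank20} solves that equation, and the semicommutator identity \eqref{HT} cancels the two Hankel products to identify the symbol as $fg+\phi\psi$. Your explicit tracking of the conjugate-linearity of $V$ when converting the scalar relations is a detail the paper passes over silently, but the substance of the argument is identical.
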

\begin{proof}
By \cite[Theorem 6]{brown1964algebraic} and Lemma \ref{rank1s} we get that
$T_{f}T_{g}+T_{\phi}T_{\psi}$ is a Toeplitz operator
if and only if
\begin{align*}
T_{\bar{z}}(T_{f}T_{g}+T_{\phi}T_{\psi})T_{z}
=T_{f}T_{g}+T_{\phi}T_{\psi}.
\end{align*}
if and only if
\begin{align}\label{V+V}
(VH_{\bar{f}}1)\otimes(VH_{g}1)+(VH_{\bar{\phi}}1)\otimes(VH_{\psi}1)=0.
\end{align}
If \eqref{V+V} holds,  Lemma \ref{rank20} yields
\begin{enumerate}
\item  either $\bar{f}$ or $g$ is analytic and either $\bar{\phi}$ or $\psi$ is analytic;or

\item   $f-\lambda\phi \in \overline{H^{2}},\psi+\lambda g\in H^{2},$ where $\lambda $ is a constant.
\end{enumerate}

Conversely,  if either $\bar{f}$ or $g$ is analytic and either $\bar{\phi}$ or $\psi$ is analytic, by \cite[Theorem 8]{brown1964algebraic}, we have
\begin{align*}
T_{f}T_{g}+T_{\phi}T_{\psi}=T_{fg+\phi\psi}.
\end{align*}
An easy computation gives
\begin{equation}\label{fg}
\begin{split}
T_{f}T_{g}+T_{\phi}T_{\psi}&=T_{f}T_{g}-T_{fg}+T_{fg}
+T_{\phi\psi}-T_{\phi\psi}+T_{\phi}T_{\psi}\\
&=-H^{*}_{\bar{f}}H_{g}-H^{*}_{\bar{\phi}}H_{\psi}+T_{fg+\phi\psi}
\end{split}
\end{equation}
If $f-\lambda\phi \in \overline{H^{2}},\psi+\lambda g\in H^{2},$ where $\lambda $ is a constant,
then
\begin{align*}
&-H^{*}_{\bar{f}}H_{g}-H^{*}_{\bar{\phi}}H_{\psi}\\
=&-H^{*}_{\bar{\lambda}\bar{\phi}}H_{g}-H^{*}_{\overline{\phi}}H_{-\lambda g}\\
=&-\lambda H^{*}_{\bar{\phi}}H_{g}+\lambda H^{*}_{\bar{\phi}}H_{g}=0.
\end{align*}
\end{proof}
\begin{lemma}\label{2Ts}
If $f,g,\phi$ and $\psi$ are in $L^{\infty},$ then  $T_{f}T_{g}+T_{\phi}T_{\psi}$ is not a Toeplitz operator and is self-adjoint
if and only if one of the following cases holds:
\begin{enumerate}[(1)]
\item either $\bar{f}$ or $g\in H^{2},$
$\bar{\phi}\notin H^{2}$ and $\psi\notin H^{2},$ $\bar{\phi}-a\psi\in H^{2}, a\in\mathbb{R}\setminus \{0\},fg+\phi\psi$ is real-valued.
\item  either $\bar{\phi}$ or $\psi\in H^{2},$
$\bar{f}\notin H^{2}$ and $g\notin H^{2},$
$\bar{f}-bg\in H^{2}, b\in\mathbb{R}\setminus \{0\},fg+\phi\psi$ is real-valued.
\item  $\bar{f},g,\bar{\phi}$ and $\psi$ are not in $H^{2},fg+\phi\psi$ is real-valued.
\begin{enumerate}[(a)]
\item $\bar{f}-\lambda g\in H^{2}$ and $\bar{\phi}-\mu\psi\in H^{2}.$  where $\lambda,\mu\in\mathbb{C}\setminus\{0\};$
\begin{enumerate}
\item $Im(\lambda)=Im(\mu)=0;$
\item $Im(\lambda)\neq 0$ and $Im(\mu)\neq 0,$ $\psi+cg\in H^{2},c\in \mathbb{C}\setminus\{0\},|c|^{2}\frac{Im(\mu)}{Im(\lambda)}=-1.$
\end{enumerate}
\item
$\bar{\phi}-\bar{a}_{11} \bar{f}-\bar{a}_{12}g\in H^{2},$ and $\psi-\bar{a}_{21} \bar{f}-\bar{a}_{22}g\in H^{2},$
where $a_{11},a_{12},a_{21}$ and $a_{22}$ are constant,
$a_{11}\bar{a}_{21}$ and $\bar{a}_{12}a_{22}$ are real numbers,
$\bar{a}_{12}a_{21}-a_{11}\bar{a}_{22}=1.$
\end{enumerate}
\end{enumerate}
\end{lemma}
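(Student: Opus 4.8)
The plan is to reduce the self-adjointness of $A:=T_fT_g+T_\phi T_\psi$ to a statement about the rank-at-most-two operator $R:=(VH_{\bar f}1)\otimes(VH_g1)+(VH_{\bar\phi}1)\otimes(VH_\psi1)$, which already encodes the failure of $A$ to be Toeplitz. Indeed, applying Lemma \ref{rank1s} to both summands gives $T_{\bar z}AT_z-A=R$, and by Lemma \ref{1T} the operator $A$ is \emph{not} Toeplitz precisely when $R\neq0$. Thus the whole lemma amounts to characterizing, in terms of $f,g,\phi,\psi$, when $A$ is self-adjoint and $R\neq0$.

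The central step is the decoupling claim: $A$ is self-adjoint if and only if $fg+\phi\psi$ is real-valued and $R$ is self-adjoint. For the forward direction, write $A=T_{fg+\phi\psi}+S$ with $S=-(H^{*}_{\bar f}H_g+H^{*}_{\bar\phi}H_\psi)$ using \eqref{HT}; since the symbol mapping theorem furnishes a $*$-homomorphism sending $A$ to $fg+\phi\psi$ and $S$ to $0$, self-adjointness of $A$ forces $fg+\phi\psi=\overline{fg+\phi\psi}$, while $R^{*}=(T_{\bar z}AT_z-A)^{*}=T_{\bar z}A^{*}T_z-A^{*}=R$. For the converse, a real symbol makes $T_{fg+\phi\psi}$ self-adjoint and yields $R=T_{\bar z}ST_z-S$; if $R=R^{*}$ then $T_{\bar z}(S-S^{*})T_z=S-S^{*}$, so $S-S^{*}$ is a Toeplitz operator by \cite[Theorem 6]{brown1964algebraic}, and since its symbol vanishes it must be $0$, whence $A=T_{fg+\phi\psi}+S$ is self-adjoint. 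Combining, $A$ is self-adjoint and non-Toeplitz if and only if $fg+\phi\psi$ is real and $R$ is a nonzero self-adjoint operator.

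It then remains to translate ``$R$ nonzero self-adjoint'' into the three listed cases. First I would record the vanishing dictionary $VH_{\bar f}1=0\iff\bar f\in H^2$, $VH_g1=0\iff g\in H^2$, and similarly for $\phi,\psi$, so that the first rank-one summand of $R$ vanishes exactly when $\bar f\in H^2$ or $g\in H^2$, and likewise for the second. Splitting on which summands survive, I apply Lemma \ref{rank1ss} when exactly one term is nonzero (giving cases (1) and (2)) and Lemma \ref{rank2s} when both are nonzero, i.e. when $\bar f,g,\bar\phi,\psi\notin H^2$ (giving case (3) with its subcases (a)(i), (a)(ii), (b)). The conditions on the abstract vectors are converted to membership conditions on $f,g,\phi,\psi$ through the conjugate-linearity of $V$ together with $H_q1=(I-P)q$: concretely $VH_{\bar p}1=\lambda\,VH_q1$ is equivalent to $\bar p-\bar\lambda q\in H^2$, and the analogous linear-combination identities produce the matrix conditions in (3)(b); the reality constraints on $\lambda,\mu,a$ and on $a_{11}\bar a_{21},\bar a_{12}a_{22}$ come directly from the corresponding constraints in Lemmas \ref{rank1ss} and \ref{rank2s}. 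The converse direction is obtained by reversing each of these equivalences.

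The main obstacle is twofold. Conceptually, the delicate point is the reverse half of the decoupling claim: from self-adjointness of $R$ alone one must recover self-adjointness of the genuinely non-Toeplitz remainder $S$, which is where \cite[Theorem 6]{brown1964algebraic} and the vanishing of the symbol of $S-S^{*}$ are essential. Technically, the bulk of the work is the careful bookkeeping in the last step: because $V$ is conjugate-linear, every scalar relation among the vectors $VH_{\bar f}1,\dots,VH_\psi1$ acquires a conjugate when pushed back to $f,g,\phi,\psi$, and one must organize the vanishing/non-vanishing alternatives and the real-part conditions so that the output matches cases (1)--(3) exactly. Neither step is deep, but the second is where sign and conjugation errors are easiest to make.
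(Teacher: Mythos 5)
Your proposal is correct and follows the same essential route as the paper: reduce via Lemma \ref{rank1s} and the Brown--Halmos criterion to the rank-at-most-two operator $R=(VH_{\bar f}1)\otimes(VH_g1)+(VH_{\bar\phi}1)\otimes(VH_\psi1)$, use the symbol map to extract the reality of $fg+\phi\psi$, and then classify nonzero self-adjoint $R$ through Lemmas \ref{rank1ss} and \ref{rank2s}, keeping track of the conjugations introduced by the anti-linearity of $V$. The one place you genuinely diverge is the converse: the paper verifies each of the five listed cases by an explicit computation rewriting $T_fT_g+T_\phi T_\psi$ as $T_{fg+\phi\psi}$ minus a manifestly self-adjoint combination of products of Hankel operators, whereas you prove a single ``decoupling'' equivalence --- $A$ is self-adjoint iff $fg+\phi\psi$ is real and $R=R^{*}$ --- whose converse rests on the observation that $T_{\bar z}(A-A^{*})T_z=A-A^{*}$ forces $A-A^{*}$ to be a Toeplitz operator lying in the semicommutator ideal, hence with zero symbol, hence zero. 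That argument is valid (the symbol map kills the semicommutator ideal and restricts to $T_\chi\mapsto\chi$ on Toeplitz operators) and it buys uniformity: all five cases of the converse follow at once from reversing the rank-one/rank-two equivalences, at the cost of leaning once more on the symbol mapping theorem where the paper's verification is purely computational.
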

\begin{proof}
Assume that $T_{f}T_{g}+T_{\phi}T_{\psi}$ is not a Toeplitz operator and  is self-adjoint, we have
\begin{align*}
T_{f}T_{g}+T_{\phi}T_{\psi}&=T_{\bar{g}}T_{\bar{f}}
+T_{\bar{\psi}}T_{\bar{\phi}},\\
T_{\bar{z}}T_{f}T_{g}T_{z}+T_{\bar{z}}T_{\phi}T_{\psi}T_{z}
&=T_{\bar{z}}T_{\bar{g}}T_{\bar{f}}T_{z}+T_{\bar{z}}T_{\bar{\psi}}T_{\bar{\phi}}T_{z}.
\end{align*}
By symbol map \cite[7.11]{douglas2012banach} and \cite[Theorem 2]{Engli1995Toeplitz}, we have
$fg+\phi\psi$ is real-valued.
By Lemma \ref{rank1s} and \cite[Theorem 6]{brown1964algebraic} we get
\begin{align*}
&(VH_{\bar{f}}1)\otimes(VH_{g}1)+(VH_{\bar{\phi}}1)\otimes(VH_{\psi}1)\\
=&(VH_{g}1)\otimes(VH_{\bar{f}}1)+(VH_{\psi}1)\otimes(VH_{\bar{\phi}}1),
\end{align*}
and
\begin{align*}
T_{f}T_{g}+T_{\phi}T_{\psi}\neq
T_{\bar{z}}T_{f}T_{g}T_{z}+T_{\bar{z}}T_{\phi}T_{\psi}T_{z}.
\end{align*}
Hence,
\begin{align*}
(VH_{\bar{f}}1)\otimes(VH_{g}1)+(VH_{\bar{\phi}}1)\otimes(VH_{\psi}1) \end{align*}
is a nonzero self-adjoint operator.

If either $VH_{\bar{f}}1$ or $VH_{g}1$ is the zero vector, and $VH_{\bar{\phi}}1$ and $VH_{\psi}1$ are both nonzero vectors,
then either $\bar{f}\in H^{2}$ or $g\in H^{2},$ and
$\bar{\phi}\notin H^{2}$ and $\psi\notin H^{2}.$
Thus $(VH_{\bar{\phi}}1)\otimes(VH_{\psi}1)$ is a nonzero self-adjoint operator. By Lemma \ref{rank1ss}, we have $\bar{\phi}-a\psi\in H^{2},a\in\mathbb{R}\setminus \{0\}.$

Similarly, if either $VH_{\bar{\phi}}1$ or $VH_{\psi}1$ is the zero vector, and if both $VH_{\bar{f}}1$ and  $VH_{g}1$ are nonzero vectors, then either $\bar{\phi}$ or $\psi\in H^{2},$
$\bar{f}\notin H^{2}$ and $g\notin H^{2},$
$\bar{f}-bg\in H^{2}, b\in\mathbb{R}\setminus \{0\}.$

If $VH_{\bar{f}}1,VH_{g}1,VH_{\bar{\phi}}1,$ and $VH_{\psi}1$ are nonzero vectors, Lemma \ref{rank2s} now gives
\begin{enumerate}[(I)]
\item $\bar{f}-\lambda g\in H^{2}$ and $\bar{\phi}-\mu\psi\in H^{2}.$  where $\lambda,\mu\in\mathbb{C}\setminus\{0\};$
\begin{enumerate}[(i)]
\item $\lambda$ and $\mu$ are real;
\item $Im(\lambda)\neq 0$ and $Im(\mu)\neq 0,$ $\psi+cg\in H^{2},c\in \mathbb{C}\setminus\{0\},|c|^{2}\frac{Im(\mu)}{Im(\lambda)}=-1.$
\end{enumerate}
\item
$\bar{\phi}-\bar{a}_{11} \bar{f}-\bar{a}_{12}g\in H^{2},\quad
\psi-\bar{a}_{21} \bar{f}-\bar{a}_{22}g\in H^{2},$
where $a_{11},a_{12},a_{21}$ and $a_{22}$ are constant,
$a_{11}\bar{a}_{21}$ and $\bar{a}_{12}a_{22}$ are real numbers,
$\bar{a}_{12}a_{21}-a_{11}\bar{a}_{22}=1.$
\end{enumerate}

To verify condition (1), an easy computation gives
\begin{align*}
T_{f}T_{g}+T_{\phi}T_{\psi}
&=T_{fg+\phi\psi}-T_{\phi\psi}+T_{\phi}T_{\psi}\\
&=T_{fg+\phi\psi}-H^{*}_{\bar{\phi}}H_{\psi}\\
&=T_{fg+\phi\psi}-aH^{*}_{\psi}H_{\psi},
\end{align*}
$T_{fg+\phi\psi}-aH^{*}_{\psi}H_{\psi}$ is self-adjoint, and  condition (1) is verified.

Condition (2) is verified in the same way as  condition (1).

To verify condition (3)(a)(i), using \eqref{fg} we obtain
\begin{align*}
T_{f}T_{g}+T_{\phi}T_{\psi}
&=T_{fg+\phi\psi}-H^{*}_{\bar{f}}H_{g}-H^{*}_{\bar{\phi}}H_{\psi}\\
&=T_{fg+\phi\psi}-\lambda H^{*}_{g}H_{g}-\mu H^{*}_{\psi}H_{\psi}.
\end{align*}
therefore, $T_{fg+\phi\psi}-\lambda H^{*}_{g}H_{g}-\mu H^{*}_{\psi}H_{\psi}$ is self-adjoint, and  Condition (3)(a)(i) is verified.

To verify condition (3)(a)(ii): $\bar{f},g,\bar{\phi}$ and $\psi$ are not in $H^{2},fg+\phi\psi$ is real-valued. $\bar{f}-\lambda g\in H^{2}$ and $\bar{\phi}-\mu\psi\in H^{2}.$  where $\lambda,\mu\in\mathbb{C}\setminus\{0\};$
$Im(\lambda)\neq 0$ and $Im(\mu)\neq 0,$ $\psi+cg\in H^{2},c\in \mathbb{C}\setminus\{0\},|c|^{2}\frac{Im(\mu)}{Im(\lambda)}=-1.$
Again using \eqref{fg} we obtain
\begin{align*}
T_{f}T_{g}+T_{\phi}T_{\psi}
&=T_{fg+\phi\psi}-H^{*}_{\bar{f}}H_{g}-H^{*}_{\bar{\phi}}H_{\psi}\\
&=T_{fg+\phi\psi}-\bar{\lambda} H^{*}_{g}H_{g}-\bar{\mu} H^{*}_{\psi}H_{\psi}\\
&=T_{fg+\phi\psi}-\bar{\lambda} H^{*}_{g}H_{g}-\bar{\mu}|c|^{2} H^{*}_{g}H_{g}\\
&=T_{fg+\phi\psi}-(\bar{\lambda}+\bar{\mu}|c|^{2}) H^{*}_{g}H_{g}.
\end{align*}
Since $|c|^{2}\frac{Im(\mu)}{Im(\lambda)}=-1, \bar{\lambda}+\bar{\mu}|c|^{2}$ is a real constant,
$T_{fg+\phi\psi}-(\bar{\lambda}+\bar{\mu}|c|^{2}) H^{*}_{g}H_{g}$ is self-adjoint, Condition (3)(a)(ii) is verified.

To verify condition (3)(b): $\bar{\phi}-\bar{a}_{11} \bar{f}-\bar{a}_{12}g $ and $\psi-\bar{a}_{21} \bar{f}-\bar{a}_{22}g$ are in $H^{2},$
where $a_{11},a_{12},a_{21}$ and $a_{22}$ are constant,
$a_{11}\bar{a}_{21}$ and $\bar{a}_{12}a_{22}$ are real numbers,
$\bar{a}_{12}a_{21}-a_{11}\bar{a}_{22}=1.$ Again using \eqref{fg} we obtain
\begin{align*}
&T_{f}T_{g}+T_{\phi}T_{\psi}\\
=&T_{fg+\phi\psi}-H^{*}_{\bar{f}}H_{g}-H^{*}_{\bar{\phi}}H_{\psi}\\
=&T_{fg+\phi\psi}-H^{*}_{\bar{f}}H_{g}
-(a_{11}H^{*}_{\bar{f}}+a_{12}H^{*}_{g})(\bar{a}_{21}H_{\bar{f}}+\bar{a}_{22}H_{g})\\
=&T_{fg+\phi\psi}-a_{11}\bar{a}_{21}H^{*}_{\bar{f}}H_{\bar{f}}
-a_{12}\bar{a}_{22}H^{*}_{g}H_{g}
-(1+a_{11}\bar{a}_{22})H^{*}_{\bar{f}}H_{g}
-a_{12}\bar{a}_{21}H^{*}_{g}H_{\bar{f}}\\
=&T_{fg+\phi\psi}-a_{11}\bar{a}_{21}H^{*}_{\bar{f}}H_{\bar{f}}
-a_{12}\bar{a}_{22}H^{*}_{g}H_{g}
-\bar{a}_{12}a_{21}H^{*}_{\bar{f}}H_{g}
-a_{12}\bar{a}_{21}H^{*}_{g}H_{\bar{f}}.
\end{align*}
$T_{fg+\phi\psi}-a_{11}\bar{a}_{21}H^{*}_{\bar{f}}H_{\bar{f}}
-a_{12}\bar{a}_{22}H^{*}_{g}H_{g}
-\bar{a}_{12}a_{21}H^{*}_{\bar{f}}H_{g}
-a_{12}\bar{a}_{21}H^{*}_{g}H_{\bar{f}}$ is self-adjoint, Condition (3)(b) is verified.
\end{proof}

\section{The product of two Toeplitz operators is a projection}

The following Lemma is well known. (see\cite[Corollary 1.9,Theorem 2.3,Theorem 2.4]{Peller2003Hankel})
\begin{lemma}\label{lemma1}
Let $R$ be a Hankel operator on $H^{2}.$
\begin{enumerate}
\item $\ker R$ is an
invariant subspace of $T_{z}$;

\item $R$ has nontrivial kernel if and only if the symbol of $R$  has the form $\bar{\theta}\phi$ where $\theta$ is some inner function and $\phi\in H^{\infty}.$
Furthermore:
\begin{enumerate}
\item $\ker H_{\bar{\theta} \phi}=\theta H^{2}, \quad \ker H_{\bar{\theta} \phi}^{*}=\overline{z \theta H^{2}};$
\item  closure $\left\{Range(H_{\bar{\theta} \phi}^{*})\right\}=(\ker H_{\bar{\theta} \phi})^{\perp}=H^{2} \ominus \theta H^{2}=K^{2}_{\theta};$
\item  closure $\left\{Range (H_{\bar{\theta}{\phi}})\right\}=\overline{z K^{2}_{\theta}}.$
\end{enumerate}
\end{enumerate}
\end{lemma}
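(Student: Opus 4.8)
The plan is to reduce all four assertions to the single elementary observation that, for $\varphi\in L^{\infty}$ and $f\in H^{2}$, one has $H_{\varphi}f=0$ if and only if $\varphi f\in H^{2}$, and then to feed this into Beurling's theorem together with the antiunitary $V$ recorded before Lemma \ref{rank1s}. First I would dispose of (1): if $f\in\ker R$, so that $\varphi f\in H^{2}$, then $\varphi(zf)=z(\varphi f)\in zH^{2}\subseteq H^{2}$, so $T_{z}f=zf\in\ker R$ as well; since the kernel of a bounded operator is closed, $\ker R$ is an invariant subspace of $T_{z}$. No machinery beyond the definition of $H_{\varphi}$ is needed here.

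For (2) I would invoke Beurling's theorem applied to the shift-invariant subspace produced in (1). If $R=H_{\varphi}$ has nontrivial kernel, then $\ker R=\theta H^{2}$ for some inner $\theta$; choosing $f=\theta$ gives $\varphi\theta\in H^{2}$, and because $\varphi\in L^{\infty}$ and $|\theta|=1$ a.e. the function $\phi:=\varphi\theta$ lies in $H^{2}\cap L^{\infty}=H^{\infty}$, whence $\varphi=\bar{\theta}\phi$. Conversely, if $\varphi=\bar{\theta}\phi$ with $\theta$ inner and $\phi\in H^{\infty}$, then $\varphi(\theta g)=\phi g\in H^{2}$ for every $g\in H^{2}$, so $\theta H^{2}\subseteq\ker R$ and the kernel is nontrivial. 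I expect the one delicate point to be the exact identity $\ker H_{\bar{\theta}\phi}=\theta H^{2}$ in the ``furthermore'' part: this holds only when $\theta$ and $\phi$ share no common inner factor, since otherwise dividing out $\gcd(\theta,\phi)$ enlarges the kernel. The clean way to handle this bookkeeping is to take $\theta$ to be precisely the inner generator of $\ker R$ supplied by Beurling (equivalently, to assume $\theta$ and $\phi$ coprime), after which the inclusion $\theta H^{2}\subseteq\ker R$ above is in fact an equality.

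Finally, (a)--(c) should follow formally. For the adjoint kernel in (a), the relation $VH_{\varphi}V=H_{\varphi}^{*}$ with $V^{2}=I$ gives $\ker H_{\bar{\theta}\phi}^{*}=V(\ker H_{\bar{\theta}\phi})=V(\theta H^{2})$, and a direct computation with $Vf(w)=\bar{w}\,\overline{f(w)}$ shows $V(\theta g)=\overline{z\theta g}$, so that $V(\theta H^{2})=\overline{z\theta H^{2}}$. For (b) I would quote the general Hilbert-space identity that the closure of the range of $R^{*}$ equals $(\ker R)^{\perp}$; here this is $(\theta H^{2})^{\perp}=K^{2}_{\theta}$. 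For (c) the same identity applied to $R$ itself gives that the range closure of $H_{\bar{\theta}\phi}$ equals $(\ker H_{\bar{\theta}\phi}^{*})^{\perp}$; since the antiunitary map $g\mapsto\overline{zg}$ carries $H^{2}$ onto $\overline{zH^{2}}=(H^{2})^{\perp}$ and sends $\theta H^{2}$ to $\overline{z\theta H^{2}}$, it carries the orthocomplement $K^{2}_{\theta}$ to $\overline{zK^{2}_{\theta}}$, which is exactly the orthogonal complement of $\overline{z\theta H^{2}}$ inside $(H^{2})^{\perp}$. Thus the main substance of the lemma is Beurling's theorem plus the transport of kernels and ranges under $V$, and the only genuine subtlety is the coprimeness normalization flagged above.
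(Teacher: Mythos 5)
Your argument is correct. Note, however, that the paper does not prove this lemma at all: it is stated as a known fact with a citation to Peller's book (Corollary 1.9, Theorems 2.3 and 2.4 there), so there is no in-paper proof to compare against. Your self-contained derivation is essentially the standard one: part (1) from the identity $H_{\varphi}f=0\iff\varphi f\in H^{2}$, part (2) from Beurling's theorem applied to the shift-invariant kernel together with $H^{2}\cap L^{\infty}=H^{\infty}$, and (a)--(c) by transporting kernels and range closures through the antiunitary $V$ (using $VH_{\varphi}V=H_{\varphi}^{*}$, $V(\theta H^{2})=\overline{z\theta H^{2}}$, and $\overline{\operatorname{Range}(A^{*})}=(\ker A)^{\perp}$). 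The one point you rightly flag is genuine: the exact equality $\ker H_{\bar{\theta}\phi}=\theta H^{2}$ requires $\theta$ and the inner part of $\phi$ to be coprime (equivalently, that $\theta$ is the Beurling generator of the kernel), since a common inner factor would strictly enlarge the kernel; the lemma as stated tacitly assumes this normalization, and your reading of $\theta$ as the generator produced by Beurling's theorem is the correct way to make the statement literally true.
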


\begin{lemma}\label{1}
Let $f,g\in L^{\infty}(\mathbb{T})$. If $T_{f}T_{g}$ is a
nontrivial idempotent, then $fg=1$ a.e. on $\mathbb{T}.$
\end{lemma}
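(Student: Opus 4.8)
The plan is to combine the symbol mapping theorem with Coburn's classical alternative for Toeplitz operators. First I would apply the symbol mapping to the idempotency relation $(T_{f}T_{g})^{2}=T_{f}T_{g}$. Since the symbol mapping $\mathfrak{T}_{L^{\infty}}\to L^{\infty}$ is a $*$-homomorphism whose kernel is the semicommutator ideal $\mathfrak{S}$, and since \eqref{HT} gives $T_{fg}-T_{f}T_{g}=H^{*}_{\bar{f}}H_{g}\in\mathfrak{S}$, the symbol of $T_{f}T_{g}$ is $fg$. Multiplicativity of the symbol map then yields $(fg)^{2}=fg$ a.e., so $fg$ is an idempotent element of $L^{\infty}$, i.e. $fg=\chi_{E}$ for some measurable $E\subseteq\mathbb{T}$. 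The entire content of the lemma is therefore to upgrade this to $|E|=1$, that is, to exclude that $fg$ vanishes on a set of positive measure.

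Second, I would rewrite the idempotency as an operator identity amenable to cancellation. From $T_{f}T_{g}T_{f}T_{g}=T_{f}T_{g}$ we obtain
\begin{align*}
T_{f}\bigl(T_{g}T_{f}-I\bigr)T_{g}=0.
\end{align*}
Writing $B=T_{g}T_{f}$ (whose symbol is again $fg=\chi_{E}$), the desired conclusion $fg=1$ is exactly $B=I$. Now I would invoke Coburn's lemma: for a nonzero symbol $\varphi$, either $T_{\varphi}$ is injective or $T_{\varphi}$ has dense range. Since $Q=T_{f}T_{g}$ is a nontrivial idempotent we have $f,g\neq 0$ and $\ker Q\neq\{0\}$ (an injective idempotent is the identity). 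If $T_{f}$ is injective, cancel it on the left to get $(B-I)T_{g}=0$; moreover $T_{g}$ cannot also be injective, for otherwise $Q$ would be injective, contradicting $\ker Q\neq\{0\}$. Hence by Coburn $T_{g}$ has dense range, and cancelling it on the right gives $B=I$, i.e. $fg=1$. Passing to the adjoint idempotent $Q^{*}=T_{\bar{g}}T_{\bar{f}}$ and running the same argument disposes of the case in which $T_{g}$ has dense range (equivalently $T_{\bar{g}}$ is injective). Thus the only configuration left uncovered is $T_{f}$ not injective and $T_{g}$ not of dense range, which by Coburn means $T_{f}$ has dense range with nontrivial kernel and $T_{g}$ is injective with non-dense range.

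The main obstacle is precisely this degenerate case, which is self-symmetric under passing to adjoints, so that no factor can be cancelled from either $Q$ or $Q^{*}$. Here I expect to need the finer structure supplied by Lemma~\ref{rank1s}: nontriviality forces $T_{\bar{z}}QT_{z}-Q=(VH_{\bar{f}}1)\otimes(VH_{g}1)\neq 0$, so both $VH_{\bar{f}}1$ and $VH_{g}1$ are nonzero, equivalently $\bar{f}\notin H^{2}$ and $g\notin H^{2}$. I would combine this with the identities $T_{f}(B^{2}-B)=0$ and $(B^{2}-B)T_{g}=0$ (obtained by multiplying $T_{f}(B-I)T_{g}=0$ on the right by $T_{f}$ and on the left by $T_{g}$), which show that $B^{2}-B$ vanishes on $\overline{\operatorname{ran}T_{g}}$ and carries its orthogonal complement into $\ker T_{f}$, together with the fact that $B^{2}-B$ has zero symbol and hence lies in $\mathfrak{S}$. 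Showing that these constraints force $B=I$, and thus $|E|=1$, is the genuinely delicate point; equivalently, one must verify that this degenerate configuration can never produce a nontrivial idempotent with $0<|E|<1$.
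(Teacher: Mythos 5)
Your reduction via the symbol map to $fg=\chi_{E^{c}}$ and your first two cases are sound: when $T_{f}$ is injective, or when $T_{\bar g}$ is injective, the cancellation argument through $T_{f}(T_{g}T_{f}-I)T_{g}=0$ together with Coburn's alternative does yield $T_{g}T_{f}=I$ (resp.\ $T_{\bar f}T_{\bar g}=I$) and hence $fg=1$. But the proof is not complete: the third configuration --- $T_{f}$ with nontrivial kernel and dense range, $T_{g}$ injective without dense range --- is exactly where the content of the lemma lives, and you leave it open. The tools you propose for it do not close it: $B^{2}-B\in\mathfrak{S}$ carries no pointwise information about $E$, and the nonvanishing of $(VH_{\bar f}1)\otimes(VH_{g}1)$ from Lemma~\ref{rank1s} only says $\bar f,g\notin H^{2}$, which is already implied by $\ker T_{f}\neq\{0\}$ and $\ker T_{\bar g}\neq\{0\}$. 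Nothing in your setup ever uses the hypothesis you need to contradict, namely $m(E)>0$.

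The missing idea --- and the paper's entire argument --- is to exploit the positive measure of $E$ through Guo's lemma \cite[Lemma 1]{guo1996problem} rather than through Coburn alone. Since $fg=0$ on $E$ with $m(E)>0$, at least one of $f,g$ vanishes on a subset $E_{1}$ of positive measure. A nonzero symbol $\varphi$ vanishing on a set of positive measure has \emph{both} $T_{\varphi}$ and $T_{\bar\varphi}$ injective: if $T_{\varphi}h=0$ with $h\neq 0$, then $\overline{\varphi h}\in zH^{2}$ vanishes on $E_{1}$, hence $\varphi h=0$ a.e.\ and $\varphi=0$ a.e., a contradiction. This two-sided injectivity is strictly stronger than Coburn's one-sided alternative, and it is incompatible with the idempotent structure: say $f|_{E_{1}}=0$; then $T_{f}$ injective forces $\ker T_{g}\neq\{0\}$ (because $\ker T_{f}T_{g}\neq\{0\}$ for a nontrivial idempotent), so $T_{\bar g}$ is injective by Coburn, and then $T_{\bar g}T_{\bar f}=(T_{f}T_{g})^{*}$ is an injective nontrivial idempotent, which is absurd. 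In particular this disposes of your degenerate case at once: there $T_{f}$ is not injective and $T_{\bar g}$ is not injective, so neither $f$ nor $g$ can vanish on a positive-measure set, contradicting $m(E)>0$. You should either import Guo's lemma or prove this boundary-uniqueness fact directly; without it the argument does not go through.
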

\begin{proof}
Suppose $T_{f}T_{g}$ is a nontrivial idempotent, namely,
$\left(T_{f}T_{g}\right)^{2}=T_{f} T_{g}.$
By symbol map \cite[Theorem 7.11]{douglas2012banach}, we have $(fg)^{2}=fg.$ Then there exists a measurable subset $E$ of $\mathbb{T}$ such that
\begin{align*}
(f g)(e^{i \theta})=\left\{\begin{array}{ll}{1}, & {e^{i \theta} \notin E}, \\ {0}, & {e^{i \theta} \in E}.\end{array}\right.
\end{align*}

If $m(E)> 0,$ then there exists a subset $E_{1}$ of $E$ with positive measure, such that either $f|_{E_{1}}=0$ or $g|_{E_{1}}=0.$

If $f|_{E_{1}}=0,$ by Guo' Lemma \cite[Lemma 1]{guo1996problem}, then $\ker{T_{f}}=\ker{T_{\bar{f}}}=\{0\}.$
Since $T_{f}T_{g}$ is a nontrivial idempotent,
$\ker{T_{f}T_{g}}\neq\{0\}.$ For any nonzero vector $x\in\ker{T_{f}T_{g}},$ we have $T_{g}x=0,$ hence $\ker{T_{g}}\neq\{0\}.$ By Coburn' Lemma \cite[7.24]{douglas2012banach},
$\ker{T^{*}_{g}}=\ker{T_{\bar{g}}}=\{0\}.$
Since $T_{f}T_{g}$ is a nontrivial idempotent, $(T_{f}T_{g})^{*}=T_{\bar{g}}T_{\bar{f}}$
is also a nontrivial idempotent.
Hence
$\ker{T_{\bar{g}}T_{\bar{f}}}\neq\{0\},$ it is a contradiction.

If $g|_{E_{1}}=0,$ same considerations apply to $T_{\bar{g}}T_{\bar{f}},$ we can also get a contradiction.
Hence $m(E)=0.$
\end{proof}

\begin{lemma}\label{01}
If a Toeplitz operator is a projection, it must be 0 and $1 .$
\end{lemma}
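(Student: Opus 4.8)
The plan is to combine self-adjointness with the symbol map to pin $\varphi$ down to a two-valued function, and then to use the semicommutator identity to force $\varphi$ to be analytic, hence constant.

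First I would note that a projection is in particular self-adjoint, and since $T_{\varphi}^{*}=T_{\bar{\varphi}}$, the relation $T_{\varphi}=T_{\varphi}^{*}$ forces $\bar{\varphi}=\varphi$, i.e.\ $\varphi$ is real-valued a.e.\ on $\mathbb{T}$. Next, exactly as in the proof of Lemma \ref{1}, applying the symbol map \cite[Theorem 7.11]{douglas2012banach} to the idempotent relation $T_{\varphi}^{2}=T_{\varphi}$ gives $\varphi^{2}=\varphi$ a.e., so $\varphi$ is the characteristic function of some measurable set $E\subseteq\mathbb{T}$. The whole content of the lemma is then to rule out every nontrivial $E$.

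The key step is to feed $f=g=\varphi$ into the semicommutator identity \eqref{HT}: since $\varphi^{2}=\varphi$ and $T_{\varphi}T_{\varphi}=T_{\varphi}$, we obtain $H^{*}_{\bar{\varphi}}H_{\varphi}=T_{\varphi^{2}}-T_{\varphi}T_{\varphi}=0$. Because $\varphi$ is real we have $\bar{\varphi}=\varphi$, so this reads $H_{\varphi}^{*}H_{\varphi}=0$; pairing with an arbitrary $f\in H^{2}$ gives $\|H_{\varphi}f\|^{2}=\langle H_{\varphi}^{*}H_{\varphi}f,f\rangle=0$, hence $H_{\varphi}=0$. Equivalently $\varphi f\in H^{2}$ for every $f\in H^{2}$, which means $\varphi\in H^{\infty}$.

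Finally I would conclude by observing that a real-valued function in $H^{\infty}$ must be constant: its Fourier coefficients at negative indices vanish by analyticity, while realness gives $\hat{\varphi}(-n)=\overline{\hat{\varphi}(n)}$, so the positive coefficients vanish as well. Thus $\varphi\equiv c$, and $\varphi^{2}=\varphi$ forces $c\in\{0,1\}$, that is, $T_{\varphi}=0$ or $T_{\varphi}=I$. I do not expect a genuine obstacle here; the only point requiring care is the passage from idempotency to $H_{\varphi}=0$, and a ready alternative is to invoke Lemma \ref{1} with $f=g=\varphi$, which yields $\varphi^{2}=1$ in the nontrivial case and contradicts $\varphi^{2}=\varphi$ at once.
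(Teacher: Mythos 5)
Your argument is correct, but it is genuinely different from what the paper does: the paper's entire proof is a one-line citation of Brown--Halmos \cite[Corollary 5]{brown1964algebraic}, which already asserts that the only \emph{idempotent} Toeplitz operators are $0$ and $I$, whereas you give a self-contained proof that actually uses the full projection hypothesis. Each step of yours checks out: self-adjointness plus injectivity of $\varphi\mapsto T_{\varphi}$ gives $\varphi=\bar{\varphi}$; the symbol map applied to $T_{\varphi}^{2}=T_{\varphi}$ gives $\varphi^{2}=\varphi$; the identity \eqref{HT} with $f=g=\varphi$ turns idempotency into $H_{\varphi}^{*}H_{\varphi}=0$, hence $H_{\varphi}=0$ and $\varphi\in H^{\infty}$; and a real-valued $H^{\infty}$ function is constant. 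The trade-off is that your route leans on self-adjointness to pass from $H^{*}_{\bar{\varphi}}H_{\varphi}=0$ to $H_{\varphi}=0$, while the Brown--Halmos result the paper cites gets by with idempotency alone (there one argues instead that $T_{\varphi}T_{\varphi}$ being a Toeplitz operator forces $\varphi$ or $\bar{\varphi}$ to be analytic, and an analytic or co-analytic characteristic function is constant); so the cited result is slightly stronger, but your proof is elementary, self-contained, and entirely adequate for the lemma as stated. Your closing alternative via Lemma \ref{1} ($\varphi^{2}=1$ versus $\varphi^{2}=\varphi$) is also valid and consistent with the paper's ordering, since Lemma \ref{1} precedes this one.
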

\begin{proof}
By \cite[Corollary 5]{brown1964algebraic}, the only idempotent Toeplitz operators are 0 and $1 .$
\end{proof}
Hence, a Toeplitz operator cannot be a nontrivial projection.
\begin{theorem}\label{main1}
If $f,g\in L^{\infty}(\mathbb{T}),$ then the following statements
are equivalent.
\begin{enumerate}
\item $T_{f}T_{g}$ is a
nontrivial projection;

\item $T_{f}T_{g}$ is a
projection, and its range is a nontrivial invariant subspace of the shift operator $T_{z};$

\item There exist a nonconstant inner function $\theta$ and a nonzero constant $a$ such that $f=a\theta$ and $ g=\frac{\bar{\theta}}{a}$.

\end{enumerate}
\end{theorem}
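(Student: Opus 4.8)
The plan is to prove the cycle $(3)\Rightarrow(2)\Rightarrow(1)\Rightarrow(3)$, with essentially all the work in $(1)\Rightarrow(3)$. For $(3)\Rightarrow(2)$: if $f=a\theta$ and $g=\bar\theta/a$, then $T_fT_g=T_\theta T_{\bar\theta}$, and since $T_\theta$ is an isometry with range $\theta H^2$, this operator is precisely the orthogonal projection onto $\theta H^2$; as $\theta$ is nonconstant, $\theta H^2$ is a nontrivial invariant subspace of $T_z$. The implication $(2)\Rightarrow(1)$ is immediate, since a projection with nontrivial range is neither $0$ nor $I$.

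For $(1)\Rightarrow(3)$, I would first invoke Lemma \ref{1} to get $fg=1$, so by \eqref{HT} one has $T_fT_g=I-H^*_{\bar f}H_g$, whence $Q:=I-T_fT_g=H^*_{\bar f}H_g$ is itself a nontrivial projection. Next apply Lemma \ref{rank1s} to $P:=T_fT_g$. Since $P=P^*$, the operator $T_{\bar z}PT_z-P$ is self-adjoint, so the rank-one operator $(VH_{\bar f}1)\otimes(VH_g1)$ is self-adjoint. It cannot vanish, for then $T_{\bar z}PT_z=P$, forcing $P$ to be a Toeplitz operator by Brown--Halmos and hence trivial by Lemma \ref{01}, a contradiction. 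Thus Lemma \ref{rank1ss} gives $VH_{\bar f}1=\lambda\,VH_g1$ for some real $\lambda\neq0$; using that $V$ is (anti-)injective and that $H_\varphi 1=(I-P)\varphi$, this says $(I-P)(\bar f-\lambda g)=0$, i.e. $\bar f-\lambda g\in H^2$. Being bounded, $\bar f-\lambda g\in H^\infty$, so $H_{\bar f}=\lambda H_g$ and therefore $Q=\lambda H^*_gH_g$.

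Since $Q$ is a nonzero self-adjoint projection and $\lambda H^*_gH_g\ge 0$ exactly when $\lambda>0$, I conclude $\lambda>0$ and $\sqrt\lambda\,H_g$ is a partial isometry. Moreover the range of $P$ equals $\ker Q=\ker(H^*_gH_g)=\ker H_g$. By Lemma \ref{lemma1}(1) this kernel is $T_z$-invariant, and it is nontrivial because $P$ is, so Beurling's theorem yields $\ker H_g=\theta H^2$ for a nonconstant inner $\theta$. This already establishes $(2)$ and identifies $P=T_\theta T_{\bar\theta}$; by Lemma \ref{lemma1}(2) we may write $g=\bar\theta\phi$ with $\phi\in H^\infty$, the inner part of $\phi$ being coprime to $\theta$.

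The remaining and hardest step is to pin down the exact symbols. Writing $h=\bar f-\lambda g\in H^\infty$ and using $fg=1$, a short computation gives $h/g=|f|^2-\lambda$, which is real-valued; since $h/g=(h\theta)/\phi$ is a ratio of $H^\infty$-functions it is of bounded type, and a bounded function of bounded type with real boundary values lies in $H^\infty$ and is hence constant, say $r$. Then $\bar f=(\lambda+r)g$ forces $(\lambda+r)H_g=\lambda H_g$, so $r=0$, giving $\bar f=\lambda g$ and $|g|^2=1/\lambda$. Consequently $\sqrt\lambda\,\phi$ is inner, $\phi=u/\sqrt\lambda$ with $u$ inner and coprime to $\theta$, and the partial-isometry identity $\lambda\|H_gx\|^2=\|x\|^2$ on $K^2_\theta$ collapses, via $|g|^2=1/\lambda$, to $T_gx=0$, i.e. $uK^2_\theta\subseteq K^2_\theta$. \emph{The main obstacle is to deduce from $uK^2_\theta\subseteq K^2_\theta$ that $u$ is constant.} I would handle it by deriving the orthogonal decomposition $K^2_u=\bigoplus_{n\ge0}\theta^n N$ with $N:=K^2_\theta\ominus uK^2_\theta$, so that $N\subseteq K^2_\theta\cap K^2_u$; since $\theta$ and $u$ are coprime one shows $K^2_\theta\cap K^2_u=\{0\}$, whence $N=\{0\}$, $K^2_u=\{0\}$, and $u$ is constant. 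Absorbing the unimodular constant, $g=\bar\theta/a$ and $f=\lambda\bar g=a\theta$ with $|a|^2=\lambda$, which is exactly $(3)$.
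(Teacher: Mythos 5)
Your cycle $(3)\Rightarrow(2)\Rightarrow(1)\Rightarrow(3)$ and the first half of $(1)\Rightarrow(3)$ --- reducing to $I-T_fT_g=\lambda H_g^*H_g$ via Lemma \ref{rank1s} and Lemma \ref{rank1ss}, then identifying $\mathrm{Range}(T_fT_g)=\ker H_g=\theta H^2$ and hence $T_fT_g=T_\theta T_{\bar\theta}$ --- are sound and run parallel to the paper's proof of $(1)\Rightarrow(2)$. The gap is in the symbol-determination step. You assert that ``a bounded function of bounded type with real boundary values lies in $H^\infty$ and is hence constant.'' This is false: $z+\bar z=(z^2+1)/z$ is a ratio of $H^\infty$ functions, is bounded and real-valued on $\mathbb{T}$, and is not constant. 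The correct statement requires membership in the Smirnov class $N^+$, i.e.\ an \emph{outer} denominator, and then the generalized maximum principle applies; but your denominator is $\phi$ (where $g=\bar\theta\phi$), about whose inner factor you only know coprimality with $\theta$, not triviality. So the conclusion $h/g=\text{const}$, and with it $\bar f=\lambda g$, $|g|^2=1/\lambda$, and everything downstream, is unsupported. (There is also a slip in the final decomposition: it should read $K^2_\theta=\bigoplus_{n\ge0}u^nN$ with $N=K^2_\theta\ominus uK^2_\theta$, forcing $K^2_\theta=\{0\}$ when $u$ is a nonconstant inner function coprime to $\theta$; but that part is moot until the earlier step is repaired.)

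For comparison, the paper closes this step by a different route: from $T_fT_g=T_\theta T_{\bar\theta}$ it applies Lemma \ref{1T} (the Brown--Halmos/Stroethoff criterion, here applied to the Toeplitz operator $0=T_fT_g-T_\theta T_{\bar\theta}$) to obtain the structural form $f-a\theta\in\overline{H^2}$, $\bar\theta-ag\in H^2$ \emph{first}, and only then combines $fg=1$ with an outer-function argument ($\operatorname{Re}(1-\theta\bar F\theta G)>0$ forces $1-\theta\bar F\theta G$ to be outer, while the same function is divisible by $\theta$) to kill the residual analytic and coanalytic parts. You could graft that argument onto your proof at the point where you have established $T_fT_g=T_\theta T_{\bar\theta}$.
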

\begin{proof}
$(1)\Rightarrow(2):$
Suppose $T_{f}T_{g}$ is a
nontrivial projection,
by Lemma \ref{2Ts} (2), we have  $\bar{f}=\lambda g+h,\lambda\in \mathbb{R}\setminus \{0\}, h\in H^{2}.$
$T_{f}T_{g}$ is a
nontrivial projection  if and only if
$I-T_{f}T_{g}$  is a nontrivial projection.
By Lemma \ref{1}, we have $I=T_{fg}.$ Hence
\begin{align*}
I-T_{f} T_{g}
&=T_{fg}-T_{f} T_{g} \\
&=H^{*}_{\bar{f}}H_g\\
&=\left(\lambda H_{g}^{*}+H_{h}\right) H_{g}\\
&=\lambda H_{g}^{*}H_{g},
\end{align*}
thus $\lambda H_{g}^{*}H_{g}$ is a nontrivial projection.

By Lemma \ref{lemma1} (1), $\ker H_{g}$ is an invariant subspace of shift operator $T_{z}.$
Moreover,
\begin{align*}
\ker H_{g}=\ker H_{g}^{*}H_{g}=\ker\lambda H_{g}^{*}H_{g}=\ker(I-T_{f} T_{g})=Range(T_{f} T_{g}).
\end{align*}
Therefore, the range of $T_{f}T_{g}$ is a nontrivial invariant subspace of the shift operator $T_{z}.$

$(2)\Rightarrow(3):$
By Beurling's theorem \cite[6.11]{douglas2012banach}, $Range(T_{f} T_{g})=\theta H^{2}$ for some nonconstant inner function $\theta.$
$T_{\theta}T_{\bar{\theta}}$ is the orthogonal projection of $L^{2}$ onto $\theta H^{2}.$ Hence
\begin{align*}
T_{f}T_{g}=T_{\theta}T_{\bar{\theta}}.
\end{align*}
By Lemma \ref{1T},
we have
\begin{align*}
f-a\theta\in \overline{H^{2}}
,\bar{\theta}-ag\in H^{2},a \in\mathbb{C}\setminus\{0\}.
\end{align*}
Note that
\begin{align*}
T_{\frac{1}{a}f}T_{ag}=T_{f}T_{g},
\end{align*}
let
\begin{equation}{\label{u}}
\begin{split}
F\triangleq\frac{1}{a}f&=\theta+\bar{\varphi},\\
G\triangleq ag&=\psi+\bar{\theta},
\end{split}
\end{equation}
where $\varphi$ and $\psi$ are in $H^{\infty}.$
Since Lemma \ref{1}, $FG=1,$  $\bar{\theta}F\theta G=1$ and $\theta\bar{F}\theta G$ is an inner function.

If $\theta\bar{F}\theta G\neq1,$
then $Re(1-\theta \bar{F}\theta G)>0,$ by \cite[Part A. 4.2.2]{nikolski2009operators}, we have $1-\theta \bar{F}\theta G$ is outer. Using \eqref{u}, then
\begin{align*}
1-\theta \bar{F}\theta G
&=1-\theta (\overline{\theta }+\varphi)\theta (\overline{\theta }+\psi)\\
&=1-(1+\theta \varphi)(1+\theta \psi)\\
&=-\theta(\varphi+\psi+\varphi \psi),
\end{align*}
it is a contradiction. Hence
$\theta \bar{F}\theta G=1.$
Note that $\theta \bar{F}$ and $\theta G$ are in $H^{\infty},$
by \cite[6.20]{douglas2012banach}, $\theta \bar{F}$ and $\theta G$ are outer functions.
Since $\theta\bar{F}\theta G=1=\bar{\theta}F\theta G,$
$\theta\bar{F}=\frac{1}{\theta G},$
$\theta\bar{F}=\overline{(\frac{1}{\theta G})},$
and
$\theta\bar{F}$ and $\theta G$ are real-valued functions in $H^{\infty},$ there exists  a nonzero real constant $c$ such that
\begin{align*}
F=c\theta\quad\text{and}\quad
G=\frac{\bar{\theta}}{c}.
\end{align*}
Combining this with \eqref{u}, we arrive at
\begin{align*}
(c-1)\theta=\bar{\varphi}\quad\text{and}\quad
(\frac{1}{c}-1)\bar{\theta}=\psi.
\end{align*}
Since $\theta$ is not a constant, $c=1,$ it follows that
\begin{align*}
F=\theta\quad\text{and}\quad
G=\bar{\theta}.
\end{align*}
From \eqref{u}, we have
\begin{align*}
f=a\theta\quad\text{and}\quad
g=\frac{1}{a}\bar{\theta}.
\end{align*}

$(3)\Rightarrow(1):$
Suppose $f=a\theta$ and $ g=\frac{\bar{\theta}}{a}$.Then
\begin{align*}
T_{f} T_{g}
=T_{\theta}T_{\bar{\theta}}.
\end{align*}
Hence $T_{f}T_{g}$ is a
nontrivial projection operator.
\end{proof}

\begin{remark}
Widom \cite[7.46]{douglas2012banach} proved that the spectrum of a Toeplitz operator is a connected subset of complex plane. It is natural to ask  whether the spectrum of the product of two Toeplitz operator is connected? Since the spectrum of a projection operator is $\{0,1\},$ by Theorem \ref{01},  the answer to the question is negative.
\end{remark}
\begin{lemma}\cite[Theorem 7.22]{fricain2016theory}\label{partial}
Let $\mathcal{H}_{1}$ and $\mathcal{H}_{2}$ be Hilbert spaces, and $A$ an operator in $\mathcal{L}\left(\mathcal{H}_{1}, \mathcal{H}_{2}\right)$ Then the following are equivalent:
\begin{enumerate}
\item $A$ is a partial isometry;
\item $A^{*}$ is a partial isometry;
\item $A A^{*}$ is an orthogonal projection, $A A^{*}=P_{(\ker A^{*})^{\bot}}$;
\item $A^{*} A$ is an orthogonal projection, $A^{*} A=P_{(\ker A)^{\bot}}$.
\end{enumerate}
\end{lemma}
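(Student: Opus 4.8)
The plan is to prove the four-fold equivalence by reducing each condition to a single \emph{self-dual} operator identity, namely $AA^{*}A = A$, and then exploiting the symmetry between $A$ and $A^{*}$. Throughout I will use two elementary facts: that $A^{*}A$ is always self-adjoint, and that $\ker(A^{*}A) = \ker A$, the latter following from $\langle A^{*}Ax, x\rangle = \|Ax\|^{2}$.

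First I would establish $(1) \Leftrightarrow (4)$. Writing $M = (\ker A)^{\perp}$ for the initial space, the assertion that $A$ is a partial isometry means exactly that $\|Ax\| = \|x\|$ for every $x \in M$. Rewriting this as $\langle A^{*}Ax, x\rangle = \langle x, x\rangle$ on $M$ and applying polarization gives $\langle A^{*}Ax, y\rangle = \langle x, y\rangle$ for all $x, y \in M$; combined with $\langle A^{*}Ax, y\rangle = \langle Ax, Ay\rangle = 0$ whenever $y \in \ker A$, this forces $A^{*}Ax = x$ on $M$ and $A^{*}Ax = 0$ on $\ker A$, i.e. $A^{*}A = P_{M} = P_{(\ker A)^{\perp}}$. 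Conversely, if $A^{*}A = P_{(\ker A)^{\perp}}$, then for $x \in (\ker A)^{\perp}$ we have $\|Ax\|^{2} = \langle A^{*}Ax, x\rangle = \langle x, x\rangle = \|x\|^{2}$, so $A$ is isometric on its initial space and hence a partial isometry.

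Next I would introduce the bridge identity and claim that $A$ is a partial isometry if and only if $AA^{*}A = A$. If $A$ is a partial isometry, then by the previous step $A^{*}A = P_{(\ker A)^{\perp}}$, so $AA^{*}A = A\,P_{(\ker A)^{\perp}} = A$, since $A$ already annihilates $\ker A$. Conversely, $AA^{*}A = A$ yields $(A^{*}A)^{2} = A^{*}A$, and as $A^{*}A$ is self-adjoint it is an orthogonal projection whose range is $(\ker(A^{*}A))^{\perp} = (\ker A)^{\perp}$, so condition $(4)$ holds. The decisive observation is that the identity $AA^{*}A = A$ is self-dual: taking adjoints turns it into $A^{*}AA^{*} = A^{*}$, which is the same identity with $A$ replaced by $A^{*}$. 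Hence $A$ is a partial isometry if and only if $A^{*}$ is, giving $(1) \Leftrightarrow (2)$; and applying the equivalence $(1) \Leftrightarrow (4)$ to $A^{*}$ in place of $A$ yields $(2) \Leftrightarrow (3)$, with $AA^{*} = (A^{*})^{*}A^{*} = P_{(\ker A^{*})^{\perp}}$.

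The main obstacle is the passage between conditions phrased in terms of $A$ and those phrased in terms of $A^{*}$: the definition of a partial isometry refers only to the initial space of $A$ and says nothing overtly about $A^{*}$. The device that removes this obstacle is precisely the self-dual identity $AA^{*}A = A$, which repackages ``partial isometry'' as a symmetric algebraic relation and makes the duality transparent. Once that identity is in hand, the remaining implications are the routine polarization and projection computations above.
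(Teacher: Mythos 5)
Your proof is correct: the reduction of ``partial isometry'' to the self-dual identity $AA^{*}A=A$, together with the polarization argument for $(1)\Leftrightarrow(4)$, is the standard textbook argument and every step checks out. Note that the paper itself offers no proof of this lemma --- it is quoted verbatim from Fricain--Mashreghi \cite[Theorem 7.22]{fricain2016theory} --- so there is nothing to compare against; your write-up simply supplies the proof the cited reference contains.
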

Using  Theorem \ref{main1}, we present
a new proof of the result of
A. Brown and R. Douglas \cite{brown1965partially}.
\begin{corollary}
If $f\in L^{\infty}$ then the following statements
are equivalent.
\begin{enumerate}

\item $T_{f}$ is a
partial isometry;

\item $T^{*}_{f}$ is a
partial isometry;

\item either $f$ or $\bar{f}$ is inner.
\end{enumerate}
\end{corollary}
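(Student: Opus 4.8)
The plan is to funnel both nontrivial equivalences through the operator $T_f^{*}T_f=T_{\bar f}T_f$, using Lemma \ref{partial} to translate ``partial isometry'' into ``$T_{\bar f}T_f$ is a projection'' and then Theorem \ref{main1} to read off the symbol. The equivalence $(1)\Leftrightarrow(2)$ is immediate from Lemma \ref{partial}, since $T_f^{*}=T_{\bar f}$ and the lemma records that $A$ is a partial isometry exactly when $A^{*}$ is. So the substance is $(1)\Leftrightarrow(3)$, and by parts (1) and (4) of Lemma \ref{partial} this reduces to showing that $T_{\bar f}T_f$ is an orthogonal projection if and only if $f$ or $\bar f$ is inner.

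For the forward direction I would split according to whether this projection is trivial or not. Using the semicommutator identity \eqref{HT} with symbols $\bar f$ and $f$, one has $T_{\bar f}T_f=T_{|f|^{2}}-H_f^{*}H_f$. If $T_{\bar f}T_f=I$, then $T_{|f|^{2}}-H_f^{*}H_f=I$ forces $|f|^{2}=1$ a.e.\ and $H_f=0$, so $f\in H^{\infty}$ with $|f|=1$, i.e.\ $f$ is inner. If instead $T_{\bar f}T_f$ is a nontrivial projection, Theorem \ref{main1} (applied with the roles $f\mapsto\bar f$, $g\mapsto f$) yields a nonconstant inner $\theta$ and a nonzero constant $a$ with $\bar f=a\theta$ and $f=\bar\theta/a$; conjugating the second relation and comparing with the first gives $a=1/\bar a$, hence $|a|=1$, so $\bar f=a\theta$ is inner. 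The only remaining possibility $T_{\bar f}T_f=0$ gives $f=0$ and is the excluded degenerate case.

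For the converse both implications are one-line computations from \eqref{HT} and Beurling's theorem. If $f$ is inner, then $|f|=1$ and $H_f=0$, so $T_{\bar f}T_f=T_{|f|^{2}}=I$ and $T_f$ is in fact an isometry. If $\bar f$ is inner, write $\bar f=u$ and $f=\bar u$; then $T_{\bar f}T_f=T_uT_{\bar u}$, which is precisely the orthogonal projection of $H^{2}$ onto $uH^{2}$, the observation recalled in the introduction. In either case $T_{\bar f}T_f$ is a projection, so $T_f$ is a partial isometry by Lemma \ref{partial}.

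The main obstacle, and the reason a separate argument is unavoidable, is the isometric case $T_{\bar f}T_f=I$: Theorem \ref{main1} classifies only the \emph{nontrivial} projections $T_fT_g$, so this case must be handled directly through \eqref{HT}, where the key point is that $|f|=1$ together with $H_f=0$ upgrades unimodularity to the analyticity needed for $f$ to be inner. Beyond that, the care lies in bookkeeping the three cases (zero, identity, nontrivial projection) and matching each to the correct branch of the dichotomy ``$f$ inner'' versus ``$\bar f$ inner.''
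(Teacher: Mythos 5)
Your proof is correct and follows essentially the same route as the paper, whose entire proof is the one line ``Using Lemma \ref{partial} and Theorem \ref{main1}.'' You in fact supply a detail the paper's terse proof glosses over: Theorem \ref{main1} only classifies \emph{nontrivial} projections, so the isometric case $T_{\bar f}T_f=I$ needs the separate argument via \eqref{HT} (or the symbol map) that you give, and your handling of the degenerate case $T_{\bar f}T_f=0$ matches the implicit convention of the statement.
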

\begin{proof}
Using  Lemma \ref{partial} and Theorem \ref{main1}.
\end{proof}
\section{The product of two Hankel operators is a projection}
\begin{theorem}\label{2H}
If $f,g\in L^{\infty}$ then the following statements
are equivalent.
\begin{enumerate}

\item $H^{*}_{\bar{f}}H_{g}$ is a
nontrivial projection operator;

\item The range of $H^{*}_{\bar{f}}H_{g}$ is
a model space  $K^{2}_{\theta},$ where $\theta$ is an inner function;

\item$\bar{f}+\bar{\mu}\bar{\theta},g+\frac{\bar{\theta}}{\mu}\in H^{2},$ where $\mu \in\mathbb{C}\setminus\{0\}$.

\end{enumerate}
\end{theorem}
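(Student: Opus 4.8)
The plan is to run the cycle $(3)\Rightarrow(1)\Rightarrow(2)\Rightarrow(3)$, reading condition (2) as the assertion that $H^{*}_{\bar f}H_{g}$ is the orthogonal projection onto a model space $K^{2}_{\theta}$. The implication $(3)\Rightarrow(1)$ is a direct computation: if $\bar f+\bar\mu\bar\theta,\ g+\bar\theta/\mu\in H^{2}$, then since Hankel operators annihilate analytic symbols we get $H_{\bar f}=-\bar\mu H_{\bar\theta}$ and $H_{g}=-\mu^{-1}H_{\bar\theta}$, whence $H^{*}_{\bar f}H_{g}=H^{*}_{\bar\theta}H_{\bar\theta}$. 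By \eqref{HT} with $\theta\bar\theta=1$ this equals $T_{\theta\bar\theta}-T_{\theta}T_{\bar\theta}=I-T_{\theta}T_{\bar\theta}$, the orthogonal projection onto $K^{2}_{\theta}$, which is a nontrivial projection.

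For $(1)\Rightarrow(2)$, note that a nontrivial projection is self-adjoint but not Toeplitz (Lemma \ref{01}). I would write $H^{*}_{\bar f}H_{g}=T_{fg}-T_{f}T_{g}=T_{fg}T_{1}+T_{-f}T_{g}$ and apply Lemma \ref{2Ts} to the right-hand pair. The inner slot $T_{1}$ has analytic symbol $1\in H^{2}$, while $\bar f,g\notin H^{2}$ (otherwise $H_{\bar f}$ or $H_{g}$ vanishes and $P=0$), so case (1) of Lemma \ref{2Ts} forces $\bar f-\lambda g\in H^{2}$ for some real $\lambda\neq0$. Then $H_{\bar f}=\lambda H_{g}$, so $P=\lambda H_{g}^{*}H_{g}$; positivity of a nonzero projection gives $\lambda>0$, and $\ker P=\ker H_{g}$, which is nonzero since $P\neq I$. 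By Lemma \ref{lemma1}, $\ker H_{g}=\theta H^{2}$ for a nonconstant inner $\theta$ and $\mathrm{Range}(P)=(\ker H_{g})^{\perp}=K^{2}_{\theta}$. As $P$ is a self-adjoint idempotent with this range, $P=I-T_{\theta}T_{\bar\theta}$.

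The crux is $(2)\Rightarrow(3)$. From (2), $P$ is a projection, so the argument above again yields $\bar f-\lambda g\in H^{2}$ with $\lambda>0$ and $\lambda H_{g}^{*}H_{g}=I-T_{\theta}T_{\bar\theta}$, now with the given $\theta$. The delicate point is to upgrade the coarse information "the coanalytic part of $g$ lies in $\bar\theta H^{\infty}$" to the exact relation $g+\bar\theta/\mu\in H^{2}$. The device I would use is to linearize $H_{g}^{*}H_{g}$ back into Toeplitz products: by \eqref{HT}, $H_{g}^{*}H_{g}=T_{|g|^{2}}-T_{\bar g}T_{g}$, so $\lambda H_{g}^{*}H_{g}=I-T_{\theta}T_{\bar\theta}$ rearranges to
\[
T_{\theta}T_{\bar\theta}+T_{\bar g}T_{-\lambda g}=T_{1-\lambda|g|^{2}}.
\]
Thus a sum of two Toeplitz products is Toeplitz, and Lemma \ref{1T} applies with data $(\theta,\bar\theta,\bar g,-\lambda g)$. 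Its case (1) is impossible because $\bar\theta\notin H^{2}$, so case (2) holds: there is $\kappa\neq0$ with $-\lambda g+\kappa\bar\theta\in H^{2}$ and $\theta-\kappa\bar g\in\overline{H^{2}}$. The first relation is exactly $g+\bar\theta/\mu\in H^{2}$ with $\mu=-\lambda/\kappa$; comparing it with the conjugate of the second forces $|\kappa|^{2}=\lambda$, hence $|\mu|^{2}=\lambda$. Feeding $g+\bar\theta/\mu\in H^{2}$ into $\bar f-\lambda g\in H^{2}$ gives $\bar f+(\lambda/\mu)\bar\theta\in H^{2}$, and $\lambda/\mu=\bar\mu$ yields $\bar f+\bar\mu\bar\theta\in H^{2}$, which is (3).

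I expect this last extraction to be the main obstacle. Producing the relation $\bar f-\lambda g\in H^{2}$ and identifying the range with a model space are routine once Lemmas \ref{2Ts} and \ref{lemma1} are in hand. What is subtle is pinning down that the coanalytic part of $g$ is a \emph{constant} multiple of $\bar\theta$: a partial-isometry analysis of the analytic truncated Toeplitz operator $P_{K^{2}_{\theta}}T_{\phi}|_{K^{2}_{\theta}}$ (writing $g=\bar\theta\phi$) does not immediately force $\phi$ to be constant, since the reproducing kernels of $K^{2}_{\theta}$ are not eigenvectors of analytic truncated Toeplitz operators. One should therefore avoid that route and instead exploit the algebraic identity above together with Lemma \ref{1T}, which delivers the exact symbol conditions directly.
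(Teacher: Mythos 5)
Your proposal is correct and follows essentially the same route as the paper: Lemma \ref{2Ts} yields $\bar f-\lambda g\in H^{2}$ so that $H^{*}_{\bar f}H_{g}=\lambda H_{g}^{*}H_{g}$, Beurling's theorem together with Lemma \ref{lemma1} identifies the range as $K^{2}_{\theta}$, and the rearrangement of $\lambda H_{g}^{*}H_{g}=I-T_{\theta}T_{\bar\theta}$ into a sum of two Toeplitz products being Toeplitz, followed by Lemma \ref{1T} and the comparison forcing $|\kappa|^{2}=\lambda$, is exactly the paper's extraction of the symbol conditions. The only difference is organizational (you arrange the implications as a clean cycle and make the step $(2)\Rightarrow(3)$ explicit), not mathematical.
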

\begin{proof}
$(1)\Rightarrow(2):$
We can suppose  $T_{fg}-T_{f}T_{g}$ is a
nontrivial projection because
$H^{*}_{\bar{f}}H_{g}=T_{fg}-T_{f}T_{g}.$
By Lemma \ref{2Ts}(2), we have
\begin{align}\label{f}
\bar{f}=\lambda g+h,\quad \lambda\in \mathbb{R}\setminus \{0\},\quad h\in H^{2}.
\end{align}

Moreover,
\begin{align*}
T_{fg}-T_{f} T_{g}
&=H^{*}_{\bar{f}}H_g\\
&=\left(\lambda H_{g}^{*}+H_{h}\right) H_{g}\\
&=\lambda H_{g}^{*}H_{g},
\end{align*}
Hence,
\begin{align*}
\ker (H^{*}_{\bar{f}}H_g)=\ker(\lambda H_{g}^{*}H_{g})=\ker H_{g}.
\end{align*}
By Lemma \ref{lemma1} (1), $\ker H_{g}$ is an invariant subspace of shift operator $T_{z},$  by Beurling's theorem \cite[6.11]{douglas2012banach}, $\ker (H^{*}_{\bar{f}}H_g)=\theta H^{2}$ for some nonconstant inner function $\theta.$
$T_{\theta}T_{\bar{\theta}}$ is the orthogonal projection of $L^{2}$ onto $\theta H^{2}.$ Hence
\begin{align*}
\lambda H_{g}^{*}H_{g}=&I-T_{\theta}T_{\bar{\theta}}\\
\lambda(T_{\bar{g}g}-T_{\bar{g}}T_{g})=&I-T_{\theta}T_{\bar{\theta}}\\
T_{\lambda |g|^{2}-1}=&T_{\lambda\bar{g}}T_{g}
-T_{\theta}T_{\bar{\theta}}
\end{align*}
Since projection operator is positive, $\lambda>0.$
By Lemma \ref{1T}, we have
\begin{align}\label{lambda}
\lambda \bar{g}+\mu\theta\in\overline{ H^{2}},\quad
\bar{\theta}+\mu g\in H^{2},\quad \mu \in\mathbb{C}\setminus\{0\}.
\end{align}
Hence,
\begin{align*}
(\lambda-|\mu|^{2})g\in H^{2},
\end{align*}
If $\lambda\neq|\mu|^{2},$  then
$g\in H^{2}$ and $H_{g}=0.$ By assumption $T_{fg}-T_{f} T_{g}=\lambda H_{g}^{*}H_{g}$ is a nontrivial projection, so
$\lambda=|\mu|^{2}.$
Using \eqref{lambda}, we have
\begin{align}\label{g}
g+\frac{\bar{\theta}}{\mu}\in H^{2}.
\end{align}
Combining \eqref{g} with \eqref{f}  gives
\begin{align*}
f+\mu\theta\in \overline{H^{2}}.
\end{align*}
$(4)\Rightarrow(1):$
Suppose $f+\mu\theta\in \overline{H^{2}}, g+\frac{\bar{\theta}}{\mu}\in H^{2},\mu \in\mathbb{C}\setminus\{0\}.$Then
\begin{align*}
H^{*}_{\bar{f}}H_g
&=H^{*}_{\bar{\mu}\bar{\theta}}H_{\frac{\bar{\theta}}{\mu}}\\
&=H^{*}_{\bar{\theta}}H_{\bar{\theta}}\\
&=I-T_{\theta}T_{\bar{\theta}}.
\end{align*}
$I-T_{\theta}T_{\bar{\theta}}$ is the projection onto $K^{2}_{\theta}.$
\end{proof}
Next, we derive an alternative proof of \cite[Theorem 2.6]{Peller2003Hankel}.
\begin{corollary}
If $f\in L^{\infty}$
then  $H_{f}$ is a
partial isometry if and only if
$\bar{f}$ is inner.
\end{corollary}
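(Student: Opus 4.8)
The plan is to derive this partial-isometry criterion as a direct corollary of Theorem~\ref{2H} combined with the characterization of partial isometries in Lemma~\ref{partial}. The statement to prove is: for $f\in L^\infty$, $H_f$ is a partial isometry if and only if $\bar f$ is inner.

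First I would recall that $H_f$ is a partial isometry precisely when $H_f^* H_f$ is an orthogonal projection, which is item (4) of Lemma~\ref{partial}. The key observation is that $H_f^* H_f$ is exactly the operator $H^*_{\bar{g}} H_g$ appearing in Theorem~\ref{2H} in the special case where the two symbols coincide. More precisely, Theorem~\ref{2H} characterizes when $H^*_{\bar f} H_g$ is a (nontrivial) projection in terms of two symbols $f$ and $g$; to apply it to $H_f^* H_f$ I need to match symbols so that the relevant operator is $H_f^* H_f$. Writing the self-adjoint positive operator $H_f^* H_f$ in the form $H^*_{\overline{\bar f}} H_f = H^*_{\bar h} H_f$ with $h = \bar f$, I can invoke Theorem~\ref{2H} with the pair of symbols chosen so that the product reads $H_f^* H_f$.

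Next I would apply condition (3) of Theorem~\ref{2H}. For $H^*_{\bar f}H_g$ to be a nontrivial projection, we need $\bar f + \bar\mu\bar\theta \in H^2$ and $g + \frac{\bar\theta}{\mu} \in H^2$ for some inner $\theta$ and nonzero $\mu$. Specializing to the diagonal case that produces $H_f^* H_f$ forces a compatibility relation between the two conditions; tracking through it should collapse the two membership conditions into a single constraint on $f$ alone, and after normalizing the constant $\mu$ (absorbing its modulus, using that the operator is a genuine projection so the scaling $\lambda=|\mu|^2$ is pinned down as in the proof of Theorem~\ref{2H}), this should read $\bar f = \text{(constant)}\cdot\bar\theta$ modulo $H^2$, i.e.\ $\bar f$ agrees with a unimodular constant times an inner function. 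One must also dispose of the trivial-projection case separately: when $H_f^*H_f$ is $0$ the operator $H_f$ vanishes (corresponding to $f$ analytic, the degenerate ``inner'' case), so the genuine content is the nontrivial projection regime covered by Theorem~\ref{2H}.

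The conceptual step that needs care is the symbol bookkeeping: Theorem~\ref{2H} is stated for $H^*_{\bar f}H_g$ with $f,g$ independent, and one must correctly identify which specialization yields $H_f^*H_f$ and then verify that conditions (3) for that specialization are equivalent to $\bar f$ being inner (up to a unimodular constant, which can be absorbed since an inner function is determined only up to such a constant anyway). I expect the main obstacle to be exactly this matching and the verification that the constant $\mu$ can be normalized so that the two separate conditions of Theorem~\ref{2H}(3) become the single assertion that $\bar f$ is inner; the converse direction is immediate, since if $\bar f = \theta$ is inner then $H_f = H_{\bar\theta}$ and Theorem~\ref{2H} (with $f=g=\bar\theta$, $\mu=1$) gives $H_{\bar\theta}^* H_{\bar\theta} = I - T_\theta T_{\bar\theta}$, which is the projection onto $K^2_\theta$, so $H_f$ is a partial isometry.
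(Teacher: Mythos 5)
Your proposal is correct and takes essentially the same route as the paper: the paper's own proof is the one-line ``Using Lemma~\ref{partial} and Theorem~\ref{2H}'', i.e.\ exactly the reduction of partial isometry to $H_f^*H_f$ being an orthogonal projection followed by an appeal to Theorem~\ref{2H}. Your write-up merely makes explicit the symbol bookkeeping and the normalization $|\mu|=1$ that the paper leaves to the reader.
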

\begin{proof}
Using  Lemma \ref{partial} and Theorem \ref{2H}.
\end{proof}

\section{Projection as self-commutators of Toeplitz operators}

The problem 237 in Paul R.Halmos's famous text: A Hilbert space problem book \cite{Halmos1978A} states: can $T^{*}T-TT^{*}$ be a projection and, if so, how? He discuss the following two cases.

(a) If $T$ is an abnormal operator of norm 1, such that $T^{*}T-TT^{*}$ is a projection, then $T$ is an isometry.

(b) Does the statement remain true if the norm condition is not assumed?

In particular, if $T$ is a Toeplitz operator.
Let $f\in L^{\infty}(\mathbb{T}),$
we consider that
when is $T^{*}_{f}T_{f}-T_{f}T^{*}_{f}$  a projection?

Define
\begin{align*}
Q=T^{*}_{f}T_{f}-T_{f}T^{*}_{f}.
\end{align*}
\begin{example}\label{E1}
Corresponding case (a), we next show that if there is a constant $\lambda$ such that $\|T_{f+\lambda}\|\leq1$ and $Q$ is a nontrivial projection, then $T_{f+\lambda}$ is an isometry.

Note that \[T^{*}_{f+\lambda}T_{f+\lambda}-T_{f+\lambda}T^{*}_{f+\lambda}
=T^{*}_{f}T_{f}-T_{f}T^{*}_{f},\quad \lambda\in\mathbb{C}.\]

Using the idea of \cite[Solution 237]{Halmos1978A} and $\|T_{f}\|=\|f\|_{\infty}$ we have
\begin{align*}
\|h\|^{2}\geq\|T_{f+\lambda}h\|^{2}=\langle T^{*}_{f+\lambda}T_{f+\lambda}h,h\rangle
=&\langle T_{f+\lambda}T^{*}_{f+\lambda}h,h\rangle+\langle Qh,h\rangle\\
=&\|T^{*}_{f+\lambda}h\|^{2}+\|Qh\|^{2}.
\end{align*}
Replace $h$ by $Qx(x\in H^{2})$ in the above formula, we have
$T^{*}_{f+\lambda}Q=0$ and $T_{f+\lambda}$ is quasinormal.
A Theorem in \cite{amemiya1975quasinormal} tells us that
a quasinormal Toeplitz operator is either normal or analytic and $f+\lambda=c\theta,$ where $c$ is a constant and $\theta$ is an inner function.
$Q$ is a nontrivial projection, we have $f+\lambda=c\theta.$
Hence,
\begin{align*}
Q=&T^{*}_{f}T_{f}-T_{f}T^{*}_{f}\\
=&T_{|f|^{2}}-T_{f}T_{\bar{f}}\\
=&H^{*}_{\bar{f}}H_{\bar{f}}\\
=&|c|^{2}H^{*}_{\bar{\theta}}H_{\bar{\theta}}.
\end{align*}
Since $Q$ is an idempotent, $|c|=1.$
By \cite[Corollary 3]{brown1964algebraic},  $T_{f+\lambda}$ is an isometry if and only if $f+\lambda$ is an inner function.
In this case, $Q=H^{*}_{\bar{\theta}}H_{\bar{\theta}}$ is the projection onto model space $K^{2}_{\theta}.$
\end{example}

\begin{example}
Let us recall Abrahamse's theorem \cite{abrahamse1976subnormal}. If
\begin{enumerate}
\item $f$ or $\bar{f}$ is of bounded type;

\item $T_{f}$ is hyponormal;

\item $\ker Q$ is invariant for $T_{f}$.
\end{enumerate}
then $T_{f}$ is normal or analytic.

Using the above theorem,  if
\begin{enumerate}
\item $f$ or $\bar{f}$ is of bounded type;

\item $Q$ is a nontrivial projection;

\item $\ker Q$ is invariant for $T_{f},$
\end{enumerate}
then $f$ is analytic. Hence,
\begin{align*}
Q=&T^{*}_{f}T_{f}-T_{f}T^{*}_{f}\\
=&T_{|f|^{2}}-T_{f}T_{\bar{f}}\\
=&H^{*}_{\bar{f}}H_{\bar{f}}.
\end{align*}
By Theorem \ref{2H}, there is a constant $c$ such that
$f=\theta+c,$ where $\theta$ is an inner function.
In this case, $Q=H^{*}_{\bar{\theta}}H_{\bar{\theta}}$ is the projection onto model space $K^{2}_{\theta}.$
\end{example}

From the above two examples, we need to consider two things:
if $Q$ is a nontrivial projection,

1. when is  the range of $Q$ a model space?

2. is the range of $Q$ necessarily a model space?

\begin{lemma}\label{K2}
If $\varphi\in L^{\infty}$  then
$T^{*}_{\varphi}T_{\varphi}-T_{\varphi}T^{*}_{\varphi}$ is the projection  on to a model space $K^{2}_{\theta}$
if and only if
$\varphi=a\theta+b\bar{\theta}+c,$ where $a,b$ and $c$ are constant with $|a|^{2}-|b|^{2}=1.$
\end{lemma}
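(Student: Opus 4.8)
The plan is to extract the symbol of $\varphi$ from the single operator identity obtained by computing $W:=T_{\bar z}QT_zT_{\bar z}^{-1}\dots$ — more precisely $W:=T_{\bar z}QT_z-Q$ — in two different ways via the rank-one formula of Lemma \ref{rank1s}. Throughout I write $Q=T^{*}_{\varphi}T_{\varphi}-T_{\varphi}T^{*}_{\varphi}=T_{\bar\varphi}T_{\varphi}-T_{\varphi}T_{\bar\varphi}$, and recall from \eqref{HT} that $Q=H^{*}_{\bar\varphi}H_{\bar\varphi}-H^{*}_{\varphi}H_{\varphi}$. The easy implication is a direct computation: if $\varphi=a\theta+b\bar\theta+c$ with $|a|^{2}-|b|^{2}=1$, then since Hankel operators ignore analytic summands we have $H_{\varphi}=bH_{\bar\theta}$ and $H_{\bar\varphi}=\bar aH_{\bar\theta}$, whence
\[
Q=(|a|^{2}-|b|^{2})\,H^{*}_{\bar\theta}H_{\bar\theta}=H^{*}_{\bar\theta}H_{\bar\theta}=I-T_{\theta}T_{\bar\theta},
\]
which is the projection onto $K^{2}_{\theta}$ by Theorem \ref{2H}.

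For the converse I would assume $Q=P_{K^{2}_{\theta}}=I-T_{\theta}T_{\bar\theta}$ and compute $W=T_{\bar z}QT_z-Q$ twice. Applying Lemma \ref{rank1s} to the representation $Q=T_{\bar\varphi}T_{\varphi}+T_{-\varphi}T_{\bar\varphi}$ gives
\[
W=(VH_{\varphi}1)\otimes(VH_{\varphi}1)-(VH_{\bar\varphi}1)\otimes(VH_{\bar\varphi}1),
\]
while applying it to $Q=T_{1}T_{1}+T_{-\theta}T_{\bar\theta}$, together with $H_{1}=0$, gives $W=-(VH_{\bar\theta}1)\otimes(VH_{\bar\theta}1)$. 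Equating the two expressions yields the rank identity
\[
(VH_{\varphi}1)\otimes(VH_{\varphi}1)+(VH_{\bar\theta}1)\otimes(VH_{\bar\theta}1)=(VH_{\bar\varphi}1)\otimes(VH_{\bar\varphi}1).
\]

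Now I would compare ranks. The right-hand side has rank at most one, whereas the left-hand side is a sum of two positive rank-one operators, so its rank equals $\dim\operatorname{span}\{VH_{\varphi}1,\,VH_{\bar\theta}1\}$. Since $\theta$ is nonconstant, $VH_{\bar\theta}1\neq0$, and Lemmas \ref{rank1ss}--\ref{rank20} force $VH_{\varphi}1=\beta\,VH_{\bar\theta}1$ and then $VH_{\bar\varphi}1=\gamma\,VH_{\bar\theta}1$ with $|\gamma|^{2}=|\beta|^{2}+1$. Applying the anti-unitary $V$ (so that $V^{2}=I$ and $V$ conjugates scalars) gives $H_{\varphi}1=\bar\beta H_{\bar\theta}1$ and $H_{\bar\varphi}1=\bar\gamma H_{\bar\theta}1$. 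Using $H_{\varphi}1=(I-P)\varphi$, $H_{\bar\varphi}1=(I-P)\bar\varphi$ and $H_{\bar\theta}1=\bar\theta-\overline{\theta(0)}$, the first relation identifies the co-analytic part of $\varphi$ with $\bar\beta\bar\theta$ up to a constant, and the second (after conjugation) identifies the analytic part of $\varphi$ with $\gamma\theta$ up to a constant. Collecting these, $\varphi=\gamma\theta+\bar\beta\bar\theta+c$, so with $a=\gamma$, $b=\bar\beta$ one obtains the stated form, and $|a|^{2}-|b|^{2}=|\gamma|^{2}-|\beta|^{2}=1$.

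I expect the decisive and most delicate step to be the two-fold evaluation of $W$ and the ensuing rank comparison: it is exactly the collinearity of the three vectors $VH_{\varphi}1,\ VH_{\bar\varphi}1,\ VH_{\bar\theta}1$ that collapses $\varphi$ to three spectral frequencies, and the numerical relation $|\gamma|^{2}=|\beta|^{2}+1$ — with the $+1$ coming from the single rank-one term $-(VH_{\bar\theta}1)\otimes(VH_{\bar\theta}1)$ — is what upgrades the hyponormality inequality $|a|^{2}\geq|b|^{2}$ to the sharp identity $|a|^{2}-|b|^{2}=1$. A secondary point requiring care is the bookkeeping of additive constants when passing from the equalities of co-analytic and analytic parts to the exact expression for $\varphi$; this is routine but must be done to absorb the $\overline{\theta(0)}$ terms into the free constant $c$.
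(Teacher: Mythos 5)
Your proof is correct, and its core is the same as the paper's: both compute $T_{\bar z}QT_z-Q$ via Lemma \ref{rank1s} for the two representations of $Q$ and arrive at exactly the identity \eqref{VVV}, i.e. $(VH_{\varphi}1)\otimes(VH_{\varphi}1)+(VH_{\bar{\theta}}1)\otimes(VH_{\bar{\theta}}1)=(VH_{\bar{\varphi}}1)\otimes(VH_{\bar{\varphi}}1)$. Where you diverge is in solving this equation: the paper splits into two cases according to whether $\{H_{\bar g}1,H_{\bar f}1\}$ (with $\varphi=f+\bar g$) is linearly dependent, handling the dependent case by reducing to $(1-|\lambda|^{2})H^{*}_{\bar f}H_{\bar f}$ being a projection and invoking Theorem \ref{2H}, and the independent case by a Gram--Schmidt evaluation; your observation that the left side is a sum of positive rank-one operators, so its rank is $\dim\operatorname{span}\{VH_{\varphi}1,VH_{\bar\theta}1\}$ while the right side has rank at most one, collapses both cases at once and yields the collinearity and the relation $|\gamma|^{2}=|\beta|^{2}+1$ without any appeal to Theorem \ref{2H}. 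This is a legitimate and arguably cleaner finish; the only point to make explicit is that $VH_{\bar\theta}1\neq0$ because $\theta$ is nonconstant, which is what makes the rank comparison bite.
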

\begin{proof}
If $\varphi=a\theta+b\bar{\theta}+c,$ where $a,b$ and $c$ are constant with $|a|^{2}-|b|^{2}=1,$ then
\begin{align*}
T^{*}_{\varphi}T_{\varphi}-T_{\varphi}T^{*}_{\varphi}
=&T^{*}_{\varphi}T_{\varphi}-T_{\varphi\bar{\varphi}}
+T_{\varphi\bar{\varphi}}-T_{\varphi}T^{*}_{\varphi}\\
=&H^{*}_{\bar{\varphi}}H_{\bar{\varphi}}-H^{*}_{\varphi}H_{\varphi}\\
=&(|a|^{2}-|b|^{2})H^{*}_{\bar{\theta}}H_{\bar{\theta}}\\
=&H^{*}_{\bar{\theta}}H_{\bar{\theta}}.
\end{align*}

Conversely, suppose $T^{*}_{\varphi}T_{\varphi}-T_{\varphi}T^{*}_{\varphi}$ is the projection  on to a model space $K^{2}_{\theta},$ then
\begin{align*}
T^{*}_{\varphi}T_{\varphi}-T_{\varphi}T^{*}_{\varphi}
=I-T_{\theta}T_{\bar{\theta}}.
\end{align*}
Write $\varphi=f+\bar{g},$ $f$ and $g$ in $H^{2},$ using Lemma \ref{rank1s}, we have
\begin{align}\label{VVV}
&(VH_{\bar{g}}1)\otimes(VH_{\bar{g}}1)
-(VH_{\bar{f}}1)\otimes(VH_{\bar{f}}1)
=-(VH_{\bar{\theta}}1)\otimes(VH_{\bar{\theta}}1).
\end{align}
\noindent{\bf{Case 1.}}\,\,\,

Assume that   $\left\{H_{\bar{g}}1, H_{\bar{f}} 1\right\}$
is linearly dependent, there are two constants $k_{1}$ and $k_{2}$ such that
\begin{align*}
k_{1} H_{\bar{g}}1+k_{2}H_{\bar{f}} 1=0.
\end{align*}
If $k_{1}$ is not zero, let $\lambda=-\frac{k_{2}}{k_{1}},$
then
$\bar{g}-\lambda\bar{f}\in H^{2}$ and
\begin{align*}
T^{*}_{\varphi}T_{\varphi}-T_{\varphi}T^{*}_{\varphi}
=&H^{*}_{\bar{\varphi}}H_{\bar{\varphi}}-H^{*}_{\varphi}H_{\varphi}\\
=&H^{*}_{\bar{f}}H_{\bar{f}}-H^{*}_{\bar{g}}H_{\bar{g}}\\
=&(1-|\lambda|^{2})H^{*}_{\bar{f}}H_{\bar{f}}.
\end{align*}
Then $(1-|\lambda|^{2})H^{*}_{\bar{f}}H_{\bar{f}}
=H^{*}_{\bar{\theta}}H_{\bar{\theta}}$ is a projection, and
$1-|\lambda|^{2}>0.$ By Theorem \ref{2H},
we have $f+\frac{\mu}{\sqrt{1-|\lambda|^{2}}}\theta \in \overline{H^{2}}, \mu$ is unimodular constant.
Therefore,
$\varphi=-\frac{\mu}{\sqrt{1-|\lambda|^{2}}}\theta
-\frac{\lambda\mu}{\sqrt{1-|\lambda|^{2}}}\bar{\theta}+c,$
where $c$ is a constant.
Let $a=-\frac{\mu}{\sqrt{1-|\lambda|^{2}}}$ and
$b=-\frac{\lambda\mu}{\sqrt{1-|\lambda|^{2}}},$ we have
\begin{align}\label{varphi}
\varphi=a\theta+b\bar{\theta}+c,
\end{align}
where $|a|^{2}-|b|^{2}=1.$

If $k_{2}$ is not zero, repeating the previous reasoning, we can prove the same equality \eqref{varphi} hold.

\noindent{\bf{Case 2.}}\,\,\,

Assume that $\left\{H_{\bar{g}}1, H_{\bar{f}} 1\right\}$
is linearly independent.
Since $V$ is anti-unitary, $\left\{VH_{\bar{g}} 1, VH_{\bar{f}} 1\right\}$
is linearly independent, by Gram-Schmidt procedure, there
exist a nonzero function $x_{0}$ in
$span\left\{VH_{\bar{g}} 1, VH_{\bar{f}} 1\right\}$ such that
\begin{align*}
\langle VH_{\bar{g}}1,x_{0}\rangle&=1,\\
\langle VH_{\bar{f}}1,x_{0}\rangle&=0.
\end{align*}
Applying operator equation \eqref{VVV} to $x_{0}$ gives
\begin{align*}
VH_{\bar{g}}1=&-\langle x_{0},VH_{\bar{\theta}}1\rangle VH_{\bar{\theta}}1,\\
H_{\bar{g}}1=&-\langle VH_{\bar{\theta}}1,x_{0}\rangle H_{\bar{\theta}}1.
\end{align*}
Let $b=-\langle x_{0},VH_{\bar{\theta}}1\rangle,$ thus
$g-b\theta\in \overline{H^{2}},$ and $g-b\theta$ is a constant.

Similarly, there exists a constant $a$ such
$f-a\theta$ is a constant. Therefore,
\begin{align*}
T^{*}_{\varphi}T_{\varphi}-T_{\varphi}T^{*}_{\varphi}
=&H^{*}_{\bar{f}}H_{\bar{f}}-H^{*}_{\bar{g}}H_{\bar{g}}\\
=&(|a|^{2}-|b|^{2})H^{*}_{\bar{\theta}}H_{\bar{\theta}}.
\end{align*}
and $|a|^{2}-|b|^{2}=1.$
\end{proof}
Recall the definition of truncated Toeplitz operator.
For $\varphi$ in $L^{2}(\mathbb{T}),$
the truncated Toeplitz operator $A^{\vartheta}_{\varphi}$ is densely defined on $K^{2}_{\vartheta}$ by
\begin{align*}
A^{\vartheta}_{\varphi}f
=(P-T_{\bar{\vartheta}}T_{\vartheta})({\varphi}f).
\end{align*}
The algebraic properties of truncated Toeplitz operator will paly key role in the following Lemma.
\begin{lemma}\label{SC}
If $\varphi\in L^{\infty}$ then
$T^{*}_{\varphi}T_{\varphi}-T_{\varphi}T^{*}_{\varphi}$ is a nontrivial projection operator and its range is not a Model space
if and only if
 $\varphi=uv+\bar{v}+a,$ where $u$ is inner, $v\in H^{2}$ with $|v|^{2}-1\in uH^{2}+\overline{uH^{2}}$ and $a$ is constant.
\end{lemma}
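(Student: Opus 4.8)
The plan is to prove both implications, treating the converse as the computational core and the forward direction as the structural one. Throughout write $\varphi=f+\bar g$ with $f,g\in H^{2}$, absorbing the additive constant $a$ into $f$ at the end; exactly as in the proof of Lemma \ref{K2}, formula \eqref{HT} gives $Q=T^{*}_{\varphi}T_{\varphi}-T_{\varphi}T^{*}_{\varphi}=H^{*}_{\bar f}H_{\bar f}-H^{*}_{\bar g}H_{\bar g}$. Since a projection is positive, $Q\ge 0$ forces $T_{\varphi}$ to be hyponormal, so Cowen's criterion produces $k\in H^{\infty}$ with $\|k\|_{\infty}\le 1$ and $\bar g-k\bar f\in H^{\infty}$; because $H_{b}T_{k}=H_{bk}$ and $H_{\psi}$ depends only on $\psi$ modulo $H^{\infty}$, this reads $H_{\bar g}=H_{\bar f}T_{k}$, and hence
\begin{align*}
Q=H^{*}_{\bar f}H_{\bar f}-T_{k}^{*}H^{*}_{\bar f}H_{\bar f}T_{k}=N-T_{k}^{*}NT_{k},\qquad N:=H^{*}_{\bar f}H_{\bar f}\ge 0 .
\end{align*}

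For the forward implication I would feed the idempotency $Q^{2}=Q$ into this Stein-type identity. The claim to extract is that $k$ is forced to be a nonconstant inner function $u$ and that $f\in uH^{2}$, say $f=uv$ and $g=\bar u f=v$ (up to constants), so that $\varphi=uv+\bar v+a$; the case where $k$ is a unimodular constant is precisely the model-space situation already settled in Lemma \ref{K2}, so the standing hypothesis that the range of $Q$ is \emph{not} a model space is exactly what drives $u$ to be nonconstant inner. Once the symbol is normalized this way, the decisive computation is that, using $T_{uv}=T_{u}T_{v}$, the fact that $T_{u}$ is an isometry ($T_{\bar u}T_{u}=I$), and the commutativity $T_{\bar u}T_{\bar v}=T_{\bar v}T_{\bar u}$ of co-analytic symbols, $Q$ collapses (via \eqref{HT}) to
\begin{align*}
Q=T_{v}T_{\bar v}-T_{uv}T_{\overline{uv}}=WW^{*},\qquad W=M_{v}\big|_{K^{2}_{u}}\colon K^{2}_{u}\to H^{2},\quad W^{*}=P_{K^{2}_{u}}T_{\bar v},
\end{align*}
where $P_{K^{2}_{u}}=I-T_{u}T_{\bar u}$. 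Since $Q$ is an orthogonal projection, $WW^{*}$ is idempotent, which is equivalent to $W^{*}W$ being a projection; the key point linking the two worlds is that $W^{*}W=A^{u}_{|v|^{2}}$ is exactly the truncated Toeplitz operator with symbol $|v|^{2}$, so idempotency of $Q$ forces $A^{u}_{|v|^{2}}=I_{K^{2}_{u}}$.

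The passage from the operator identity $A^{u}_{|v|^{2}}=I_{K^{2}_{u}}$ to the stated symbol condition is immediate from Sarason's criterion $A^{u}_{\psi}=0\iff\psi\in uH^{2}+\overline{uH^{2}}$ applied to $\psi=|v|^{2}-1$, yielding $|v|^{2}-1\in uH^{2}+\overline{uH^{2}}$. For the converse I would run this in reverse: from $\varphi=uv+\bar v+a$, reduce $Q$ to $WW^{*}$ as above, read off $W^{*}W=A^{u}_{|v|^{2}}$, and use the hypothesis on $|v|^{2}$ with Sarason's criterion to get $W^{*}W=I_{K^{2}_{u}}$; thus $W$ is an isometry and $Q=WW^{*}=P_{vK^{2}_{u}}$ is a nontrivial projection whose range $vK^{2}_{u}$ is not backward-shift invariant, hence not a model space (this is what separates the present case from Lemma \ref{K2}). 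The \textbf{main obstacle} is the forward structural step: extracting from mere idempotency of $N-T_{k}^{*}NT_{k}$ that Cowen's contraction $k$ must be an inner function and that $f$ carries it as an inner factor, i.e.\ upgrading $k$ to $u$ and producing the exact factorization $\varphi=uv+\bar v+a$; once that is in place, the reduction $W^{*}W=A^{u}_{|v|^{2}}$ and Sarason's criterion make the remaining equivalence with the $|v|^{2}$-condition routine.
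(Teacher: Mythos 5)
Your setup (hyponormality via positivity of $Q$, Cowen's criterion, the converse computation reducing $Q$ to $T_{v}P_{K^{2}_{u}}T_{\bar v}=WW^{*}$ with $W^{*}W=A^{u}_{|v|^{2}}$, and Sarason's criterion $A^{u}_{\psi}=0\iff\psi\in uH^{2}+\overline{uH^{2}}$) matches the paper's converse direction essentially step for step, and that half of your argument is sound. But the forward direction has a genuine gap, and it is exactly the one you flag yourself as ``the main obstacle'': you never prove that Cowen's contraction $k$ must be inner, nor that $f$ factors as $f=c+uv$ with $u$ as an inner factor. These are not routine consequences of idempotency of $N-T_{k}^{*}NT_{k}$; they are the substance of the lemma, and your remark that the unimodular-constant case is the model-space case does not dispose of the generic case where $k$ is nonconstant and non-inner.

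The paper closes this gap in two stages, each using a specific external input. First, writing $Q=H^{*}_{\bar f}(I-S_{\bar u}S_{u})H_{\bar f}$ with $S_{u}$ the compressed multiplication on $(H^{2})^{\perp}$, it shows by a norm argument that if $I-S_{\bar u}S_{u}$ has nontrivial kernel then $u$ is inner; hence if $u$ is \emph{not} inner, $\ker Q=\ker H_{\bar f}$ is shift-invariant and the range of $Q$ is a model space, contradicting the standing hypothesis. Second, with $u$ inner, $Q=H^{*}_{\bar f}H_{\bar u}H^{*}_{\bar u}H_{\bar f}$, and Gu's separation theorem for kernels of Hankel operators \cite[Theorem 1.1]{gu2001separation} forces $\ker H^{*}_{\bar f}H_{\bar u}=\ker H_{\bar u}=uH^{2}$ (the other alternative again yields a model space). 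Compressing the resulting identity to $K^{2}_{u}$ gives $A^{u}_{f}A^{u}_{\bar f}=A^{u}_{|f|^{2}-1}$, i.e.\ a product of truncated Toeplitz operators that is itself a truncated Toeplitz operator, and it is Sedlock's theorem \cite[Theorem 5.2]{sedlock2010algebras} that then forces $A^{u}_{f}=cI_{K^{2}_{u}}$, whence $f-c\in uH^{2}$ by \cite[Theorem 3.1]{Sarason2007Algebraic} and $f=c+uv$. Without some substitute for these two steps --- the injectivity claim for $I-S_{\bar u}S_{u}$ plus the non-model-space hypothesis, and the Gu/Sedlock argument producing the inner factor --- your proof of the forward implication does not go through.
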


\begin{proof}
Assume that $T^{*}_{\varphi}T_{\varphi}-T_{\varphi}T^{*}_{\varphi}$ is a projection.
Since  projection is positive,
$T^{*}_{\varphi}T_{\varphi}-T_{\varphi}T^{*}_{\varphi}$ is positive and  $T_{\varphi}$ is hyponormal.
We recall the characterization of Hyponormality of Toeplitz operators form Carl C. Conwen \cite{cowen1988hyponormality}. The theorem can be stated as follows:

 If $\varphi$ is in $L^{\infty}(\mathbb{T}),$ where $\varphi=f+\bar{g}$ for $f$ and $g$ in $H^{2},$ then $T_{\varphi}$ is hyponormal if and only if
\begin{align}\label{fg}
g=c+T_{\bar{u}} f
\end{align}
for some constant $c$ and some function $u$ in $H^{\infty}$ with $\|u\|_{\infty} \leq 1.$

According to Conwen's Theorem, if $Q$ is a nontrivial projection, using \eqref{fg}, we have
\begin{align*}
Q=&H^{*}_{\bar{\varphi}}H_{\bar{\varphi}}-H^{*}_{\varphi}H_{\varphi}\\
=&H^{*}_{\bar{f}}H_{\bar{f}}-H^{*}_{\bar{g}}H_{\bar{g}}\\
=&H^{*}_{\bar{f}}H_{\bar{f}}-H^{*}_{\overline{{T_{\bar{u}}f}}}H_{\overline{{T_{\bar{u}}f}}}\\
=&H^{*}_{\bar{f}}H_{\bar{f}}
-H^{*}_{\overline{{P{\bar{u}}f}}+\overline{{P_{-}{\bar{u}}f}}}
H_{\overline{{P{\bar{u}}f}}+\overline{{P_{-}{\bar{u}}f}}}\\
=&H^{*}_{\bar{f}}H_{\bar{f}}-H^{*}_{u\bar{f}}H_{u\bar{f}}\\
=&H^{*}_{\bar{f}}H_{\bar{f}}-H^{*}_{\bar{f}}S_{\bar{u}}S_{u}H_{\bar{f}}\\
=&H^{*}_{\bar{f}}(I-S_{\bar{u}}S_{u})H_{\bar{f}}.
\end{align*}
where $S_{u}x=P_{-}(ux), x\in (H^{2})^{\perp}.$

Since $\|u\|_{\infty}\leq1$ and $\|S_{u}\|=\|u\|_{\infty},S_{u}$
is a contraction, we have $I-S_{\bar{u}}S_{u}$ is positive, and
$\ker(I-S_{\bar{u}}S_{u})=\ker (I-S_{\bar{u}}S_{u})^{1/2}.$

We claim that if $I-S_{\bar{u}}S_{u}$ is not injective, then $u$ is an inner function. To see this,  let $x$ be a nonzero vector such that $(I-S_{\bar{u}}S_{u})x=0.$

Hence,
\begin{align*}
\left\langle (I-S_{\bar{u}}S_{u})x,x\right\rangle
=&\left\langle x,x\right\rangle-\left\langle S_{u}x,S_{u}x\right\rangle\\
=&\|x\|^{2}-\|S_{u}x\|^{2}=0
\end{align*}
and
\begin{align*}
\int_{\mathbb{T}}|x|^{2}dm=\|x\|^{2}=\|S_{u}x\|^{2}=\|P_{-}ux\|^{2}\leq\|ux\|^{2}
=\int_{\mathbb{T}}|ux|^{2}dm.
\end{align*}
Since $\|u\|_{\infty}\leq1,$  \[|ux|^{2}-|x|^{2}=(|u|^{2}-1)|x|^{2}\leq 0.\]
But $\int_{\mathbb{T}}(|u|^{2}-1)|x|^{2}dm\geq0,$ thus $(|u|^{2}-1)|x|^{2}=0.a.e$ on $\mathbb{T}.$ Hence, $|u|=1
.a.e$ on $\mathbb{T},$ and $u$ is an inner function.

Write
\begin{align*}
Q=&H^{*}_{\bar{f}}(I-S_{\bar{u}}S_{u})^{1/2}(I-S_{\bar{u}}S_{u})^{1/2}H_{\bar{f}}\\
=&((I-S_{\bar{u}}S_{u})^{1/2}H_{\bar{f}})^{*}(I-S_{\bar{u}}S_{u})^{1/2}H_{\bar{f}},
\end{align*}
note that $\ker Q=\ker ((I-S_{\bar{u}}S_{u})^{1/2}H_{\bar{f}}).$
According to the  above claim, we have that if  $u$ is not an inner function, then $\ker (I-S_{\bar{u}}S_{u})^{1/2}=\ker(I-S_{\bar{u}}S_{u})=\{0\}$
and $\ker Q=\ker H_{\bar{f}}.$
By Lemma \ref{lemma1}(1), $\ker H_{\bar{f}}$ is an invariant subspace of $T_{z}.$
Hence, the range of $Q$ is a model space, it is a contradiction.

It remains to consider the case that $u$ be an inner function. Write
\begin{align*}
Q=&H^{*}_{\bar{f}}(I-S_{\bar{u}}S_{u})H_{\bar{f}}\\
=&H^{*}_{\bar{f}}(S_{\bar{u}u}-S_{\bar{u}}S_{u})H_{\bar{f}}\\
=&H^{*}_{\bar{f}}H_{\bar{u}}H^{*}_{\bar{u}}H_{\bar{f}}.
\end{align*}
By Gu's theorm \cite[Theorem 1.1]{gu2001separation}, for two Hankel operatrs $H_{\bar{u}}$ and $H_{\bar{f}},$ either $\ker H^{*}_{\bar{u}}H_{\bar{f}}=\ker H_{\bar{f}}$   or
$\ker H^{*}_{\bar{f}}H_{\bar{u}}=\ker H_{\bar{u}}.$

If $\ker H^{*}_{\bar{u}}H_{\bar{f}} =\ker H_{\bar{f}} ,$ then
$\ker Q=\ker H^{*}_{\bar{u}}H_{\bar{f}} =\ker H_{\bar{f}}.$
By Lemma \ref{lemma1}(1), $\ker H_{\bar{f}}$ is an invariant subspace of $T_{z}.$
Hence, the range of $Q$ is a model space, it is a contradiction.

By Lemma \ref{partial},  $H^{*}_{\bar{f}}H_{\bar{u}}H^{*}_{\bar{u}}H_{\bar{f}}$ is an orthogonal projection, then $H^{*}_{\bar{u}}H_{\bar{f}}H^{*}_{\bar{f}}H_{\bar{u}}$ is  an orthogonal projection.

If $\ker H^{*}_{\bar{f}}H_{\bar{u}}
=\ker H_{\bar{u}}=uH^{2}$(Lemma \ref{lemma1}(2)(a)),
then
\begin{align}\label{6H}
H^{*}_{\bar{u}}H_{\bar{f}}H^{*}_{\bar{f}}H_{\bar{u}}f
=H^{*}_{\bar{u}}H_{\bar{u}}.
\end{align}
Using the property $V,$ we have
\begin{align*}
VH^{*}_{\bar{u}}H_{\bar{f}}H^{*}_{\bar{f}}H_{\bar{u}}V
=&H_{\bar{u}}H^{*}_{\bar{f}}H_{\bar{f}}H^{*}_{\bar{u}},\\
VH^{*}_{\bar{u}}H_{\bar{u}}V
=&H_{\bar{u}}H^{*}_{\bar{u}}.
\end{align*}
Hence
\begin{align}\label{H4}
H_{\bar{u}}H^{*}_{\bar{f}}H_{\bar{f}}H^{*}_{\bar{u}}
=H_{\bar{u}}H^{*}_{\bar{u}}.
\end{align}
Note  that  $\ker H^{*}_{\bar{u}}=\overline{zuH^{2}}$(Lemma \ref{lemma1}(2)(a))
and
$\overline{zH^{2}}\ominus\overline{zu H^{2}}=\overline{zK^{2}_{u}}=\bar{u}K^{2}_{u}.$

For every $h\in K^{2}_{u},$ we have
$H^{*}_{\bar{u}}\bar{u}h=P(u\bar{u}h)=h,$ and
\begin{align*}
\langle H_{\bar{u}}H^{*}_{\bar{f}}H_{\bar{f}}H^{*}_{\bar{u}}\bar{u}h,\bar{u}h\rangle
=&\langle H_{\bar{u}}H^{*}_{\bar{u}}\bar{u}h,\bar{u}h\rangle,\\
\langle H^{*}_{\bar{f}}H_{\bar{f}}H^{*}_{\bar{u}}\bar{u}h
,H^{*}_{\bar{u}}\bar{u}h\rangle
=&\langle H^{*}_{\bar{u}}\bar{u}h,H^{*}_{\bar{u}}\bar{u}h\rangle,\\
\langle H^{*}_{\bar{f}}H_{\bar{f}}h,h\rangle
=&\langle h,h\rangle.
\end{align*}
Hence
\begin{align*}
P_{K^{2}_{u}}(H^{*}_{\bar{f}}H_{\bar{f}})|_{K^{2}_{u}}=I_{K^{2}_{u}}.
\end{align*}
where $P_{K^{2}_{u}}$ is the orthogonal projection onto $K^{2}_{u}$ and $I_{K^{2}_{u}}$ is the
identity operator on $K^{2}_{u}.$

An easy computation gives
\begin{align*}
P_{K^{2}_{u}}H^{*}_{\bar{f}}H_{\bar{f}}h
=&P_{K^{2}_{u}}Pf(I-P)\bar{f}h\\
=&P_{K^{2}_{u}}f(I-P)\bar{f}h\\
=&P_{K^{2}_{u}}f\bar{f}h-P_{K^{2}_{u}}fP\bar{f}h\\
=&P_{K^{2}_{u}}f\bar{f}h-P_{K^{2}_{u}}f(P-uP\bar{u}+uP\bar{u})\bar{f}h\\
=&P_{K^{2}_{u}}f\bar{f}h-P_{K^{2}_{u}}f(P_{K^{2}_{u}}+uP\bar{u})\bar{f}h\\
=&P_{K^{2}_{u}}f\bar{f}h-P_{K^{2}_{u}}fP_{K^{2}_{u}}\bar{f}h\\
=&A^{u}_{|f|^{2}}h-A^{u}_{f}A^{u}_{\bar{f}}h.
\end{align*}
Hence
\begin{equation}\label{A}
\begin{split}
A^{u}_{|f|^{2}}-A^{u}_{f}A^{u}_{\bar{f}}=&I_{K^{2}_{u}},\\
A^{u}_{f}A^{u}_{\bar{f}}=&A^{u}_{|f|^{2}-1}.
\end{split}
\end{equation}

Since $f$ is analytic,
using N. A. Sedlock' theorem \cite[Theorem 5.2]{sedlock2010algebras} leads to $A^{u}_{f}=cI_{K^{2}_{u}},$ where $c$ is a constant,  and $A^{u}_{f-c}$ is the zero operator, then $f-c\in uH^{2}$\cite[Theorem 3.1]{Sarason2007Algebraic}. There is a function $v\in H^{2},$ such that $f=c+uv.$
Since \eqref{fg}, $\varphi=uv+\bar{v}+a,$ where $a$ is a constant.

Substituting $f=c+uv$ into \eqref{6H}, we have
\begin{align}
H^{*}_{\bar{u}}H_{\bar{u}\bar{v}}H^{*}_{\bar{u}\bar{v}}H_{\bar{u}}
=H^{*}_{\bar{u}}H_{\bar{u}}.
\end{align}
Repeating the above reasoning form \eqref{H4} to \eqref{A}, we obtain
\begin{align*}
A^{u}_{uv}A^{u}_{\bar{v}\bar{u}}=A^{u}_{|v|^{2}-1}.
\end{align*}
Note that $A^{u}_{uv}=A^{u}_{f-c}=0,$
hence $A^{u}_{|v|^{2}-1}$ is zero operator,
using \cite[Theorem 3.1]{Sarason2007Algebraic} again,
we have $|v|^{2}-1\in uH^{2}+\overline{uH^{2}}.$

Conversely, if
$\varphi=uv+\bar{v}+c,$ where $u$ is inner, $v\in H^{2}$ with $|v|^{2}-1\in uH^{2}+\overline{uH^{2}}$ and $c$ is constant,
by Lemma \ref{K2}, the range of $Q$ is not a model space.
An easy computation gives
\begin{equation}\label{QQ}
\begin{split}
T^{*}_{\varphi}T_{\varphi}-T_{\varphi}T^{*}_{\varphi}
=&H^{*}_{\bar{\varphi}}H_{\bar{\varphi}}-H^{*}_{\varphi}H_{\varphi}\\
=&H^{*}_{\bar{u}\bar{v}}H_{\bar{u}\bar{v}}
-H^{*}_{\bar{v}}H_{\bar{v}}\\
=&T_{uv\bar{u}\bar{v}}-T_{uv}T_{\bar{u}\bar{v}}
-(T_{v\bar{v}}-T_{v}T_{\bar{v}})\\
=&T_{|v|^{2}}-T_{uv}T_{\bar{u}\bar{v}}
-(T_{|v|^{2}}-T_{v}T_{\bar{v}})\\
=&T_{v}T_{\bar{v}}-T_{uv}T_{\bar{u}\bar{v}}\\
=&T_{v}T_{\bar{v}}-T_{v}T_{u}T_{\bar{u}}T_{\bar{v}}\\
=&T_{v}(I-T_{u}T_{\bar{u}})T_{\bar{v}}\\
=&T_{v}H^{*}_{\bar{u}}H_{\bar{u}}T_{\bar{v}}.
\end{split}
\end{equation}
Note that $T_{v}H^{*}_{\bar{u}}H_{\bar{u}}T_{\bar{v}}
=(H_{\bar{u}}T_{\bar{v}})^{*}H_{\bar{u}}T_{\bar{v}}$ is positive, must be self-adjoint.

It remains  to show that $T_{v}H^{*}_{\bar{u}}H_{\bar{u}}T_{\bar{v}}$ is an idempotent.
Since $v$ is analytic,
\begin{align*}
T_{v}H^{*}_{\bar{u}}H_{\bar{u}}
T_{\bar{v}}T_{v}H^{*}_{\bar{u}}H_{\bar{u}}T_{\bar{v}}
=&T_{v}H^{*}_{\bar{u}}H_{\bar{u}}
T_{|v|^{2}}H^{*}_{\bar{u}}H_{\bar{u}}T_{\bar{v}},
\end{align*}
let $|v|^{2}=uh+\bar{u}\bar{h}_{1}+1,h,h_{1}\in H^{2},$ for every $k$ in $K^{2}_{u},$ we have
\begin{align*}
H^{*}_{\bar{u}}H_{\bar{u}}T_{|v|^{2}}k
=&H^{*}_{\bar{u}}H_{\bar{u}}P(uh+\bar{u}\bar{h}_{1}+1)k\\
=&H^{*}_{\bar{u}}H_{\bar{u}}P(uhk+\bar{u}\bar{h}_{1}k+k)\\
=&H^{*}_{\bar{u}}H_{\bar{u}}(uhk+k)\\
=&k.
\end{align*}
Since $Range({H^{*}_{\bar{u}}H_{\bar{u}}})=K^{2}_{u},$
$T_{v}H^{*}_{\bar{u}}H_{\bar{u}}
T_{\bar{v}}T_{v}H^{*}_{\bar{u}}H_{\bar{u}}T_{\bar{v}}=
T_{v}H^{*}_{\bar{u}}H_{\bar{u}}
T_{\bar{v}}.$
\end{proof}
\begin{remark}
In fact, $\|h\|_{\infty}=1.$  Since $\ker Q$ is nontrivial,  there is a nonzero vector $x$ such that,
\begin{align*}
H^{*}_{\bar{f}}H_{\bar{f}}x=&H^{*}_{f}H_{f}x\neq0,\quad
\|H_{\bar{f}}x\|=\|H_{f}x\|
\end{align*}
and
$\|H_{f}x\|=\|S_{h}H_{\bar{f}}x\|\leq\|S_{h}\|\|H_{\bar{f}}x\|.$
Hence $\|S_{h}\|=\|h\|_{\infty}\geq1.$
\end{remark}

\begin{lemma}\label{SCC}
If $v\in H^{2}$ and $u$ is inner,
$|v|^{2}-1\in uH^{2}+\overline{uH^{2}}$
if and only if
there is a function $h\in H^{2}$ such that $|v|^{2}=Re(uh+1).$
\end{lemma}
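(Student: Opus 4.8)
The plan is to prove the two implications separately, treating the equivalence as essentially a \emph{symmetrization} statement whose whole content is that $|v|^{2}-1$ is real-valued on $\mathbb{T}$. The organizing identity is
\begin{align*}
Re(uh+1)=1+\tfrac{1}{2}(uh+\bar{u}\bar{h}),
\end{align*}
so that the equation $|v|^{2}=Re(uh+1)$ is literally the same as $|v|^{2}-1=\tfrac{1}{2}(uh+\bar{u}\bar{h})$. This rewriting makes the link to $uH^{2}+\overline{uH^{2}}$ transparent, since the right-hand side has its first summand in $uH^{2}$ and its second in $\overline{uH^{2}}$.

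For the easy direction I would assume there is $h\in H^{2}$ with $|v|^{2}=Re(uh+1)$. Setting $w=\tfrac{1}{2}h\in H^{2}$, the identity above gives $|v|^{2}-1=uw+\overline{uw}$, and since $uw\in uH^{2}$ while $\overline{uw}\in\overline{uH^{2}}$, this immediately places $|v|^{2}-1$ in $uH^{2}+\overline{uH^{2}}$. No reality is even needed here.

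For the converse, which is the substantive direction, I would start from a representation $|v|^{2}-1=uw_{1}+\bar{u}\bar{w}_{2}$ with $w_{1},w_{2}\in H^{2}$, supplied by the hypothesis. The point is that $|v|^{2}-1$ equals its own complex conjugate, so averaging this representation with its conjugate $\bar{u}\bar{w}_{1}+uw_{2}$ yields
\begin{align*}
|v|^{2}-1=\frac{u(w_{1}+w_{2})+\bar{u}(\bar{w}_{1}+\bar{w}_{2})}{2}.
\end{align*}
Writing $h=w_{1}+w_{2}\in H^{2}$, so that $\bar{h}=\bar{w}_{1}+\bar{w}_{2}$, this is exactly $|v|^{2}-1=\tfrac{1}{2}(uh+\bar{u}\bar{h})=Re(uh)$, whence $|v|^{2}=Re(uh+1)$, as required.

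I expect the only genuine difficulty to be conceptual rather than computational: one must notice that a generic element $uw_{1}+\bar{u}\bar{w}_{2}$ of $uH^{2}+\overline{uH^{2}}$ carries two \emph{independent} $H^{2}$-functions, and that it is precisely the reality of $|v|^{2}-1$ that forces these two into the single symmetric function $h$ needed to express the quantity as $Re(uh)$. In particular no Fourier-coefficient bookkeeping, no inner–outer factorization, and no use of the inner structure of $u$ should be required; the reality of $|v|^{2}-1$ does all the work.
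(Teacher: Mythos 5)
Your proposal is correct, and the forward implication coincides with the paper's. The converse, however, is argued by a genuinely different (and slicker) route. The paper equates the representation $|v|^{2}-1=uF+\bar{u}\overline{G}$ with its complex conjugate to get $u(F-G)=\bar{u}(\overline{F}-\overline{G})$, observes that an analytic function equal to a coanalytic one must be a constant $\lambda$, and then rules out $\lambda\neq 0$ by noting that $u\cdot\frac{1}{\lambda}(F-G)=1$ would force the inner function $u$ to be outer; this yields $F=G$ and $h=2F$. You instead \emph{average} the representation with its conjugate, so that $h=w_{1}+w_{2}$ works directly without ever showing the two $H^{2}$-functions coincide. Your argument buys economy and robustness: it uses only the reality of $|v|^{2}-1$, invokes no inner--outer theory, and remains valid even when $u$ is a unimodular constant (a degenerate case the paper's contradiction step does not actually cover). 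The paper's argument buys a small amount of extra structural information, namely that for nonconstant inner $u$ every such representation necessarily has $F=G$, which your symmetrization deliberately bypasses. Both proofs are complete; yours is the more elementary.
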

\begin{proof}
Since
\begin{align*}
Re(uh+1)=&\frac{1}{2}(uh+1+\bar{u}\bar{h}+1)\\
=&u(\frac{1}{2}h)+\bar{u}(\frac{1}{2}\bar{h})+1,
\end{align*}
 $|v|^{2}=Re(uh+1)$ implies  $|v|^{2}-1\in uH^{2}+\overline{uH^{2}}.$

Suppose $|v|^{2}-1\in uH^{2}+\overline{uH^{2}},$ then there exist $F,G\in H^{2}$ such that $|v|^{2}-1=uF+\bar{u}\overline{G},$ and  $uF+\bar{u}\overline{G}$ is real-vauled,
$uF+\bar{u}\overline{G}=\bar{u}\overline{F}+uG.$
Hence,
\begin{align*}
u(F-G)=\bar{u}(\overline{F}-\overline{G}).
\end{align*}
The left-hand side of the above equation is analytic, the right-hand side is conjugate analytic, $u(F-G)$ is equals to a constant $\lambda.$  If $\lambda$ is not zero, then
$u\frac{1}{\lambda}(F-G)=1,$ and $u$ is outer \cite[6.20]{douglas2012banach}, that is a contradiction. Thus $\lambda=0,F=G,$ and $|v|^{2}=Re(u(2F)+1).$
\end{proof}
\begin{remark}
The set $\Theta=\{v:v\in H^{2},|v|^{2}-1\in uH^{2}+\overline{uH^{2}}\}$
is not empty.
It is easy to see that if $v$ is inner, $v\in \Theta.$
Using \eqref{QQ}, we have $Q=T_{v}T_{\bar{v}}-T_{uv}T_{\bar{u}\bar{v}},$ and the
range of $Q$ is $vH^{2}\ominus vuH^{2}=vK^{2}_{u}.$
Moreover, $u\pm1\in\Theta.$
\end{remark}

The following theorem summarizes Lemma \ref{K2}, Lemma \ref{SC} and Lemma \ref{SCC}.
\begin{theorem}\label{main3}
If $\varphi\in L^{\infty}$ then
$T^{*}_{\varphi}T_{\varphi}-T_{\varphi}T^{*}_{\varphi}$ is a nontrivial projection operator
if and only if  one of following conditions holds
\begin{enumerate}
\item The range of $T^{*}_{\varphi}T_{\varphi}-T_{\varphi}T^{*}_{\varphi}$
    is a model space, and
    $\varphi=a\theta+b\bar{\theta}+c,$ where $\theta$ is an inner function,$a,b$ and $c$ are constant with $|a|^{2}-|b|^{2}=1;$
\item The range of $T^{*}_{\varphi}T_{\varphi}-T_{\varphi}T^{*}_{\varphi}$
    is not a model space, and
$\varphi=uv+\bar{v}+c,$ where $u$ is inner, $c$ is constant, $v\in H^{2}$ with $|v|^{2}=Re(uh+1)(h\in H^{2})$.
\end{enumerate}
\end{theorem}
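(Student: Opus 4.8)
The plan is to assemble Theorem \ref{main3} directly from the three preceding lemmas, organizing the argument as a dichotomy according to whether the range of $Q = T^{*}_{\varphi}T_{\varphi}-T_{\varphi}T^{*}_{\varphi}$ is a model space. The key preliminary observation is that whenever $Q$ is a projection it is self-adjoint and idempotent, hence equal to the orthogonal projection onto its own range; this is exactly what allows me to feed range information into Lemma \ref{K2}, whose hypothesis is stated in terms of $Q$ being the projection \emph{onto} a model space.

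First I would treat the forward implication. Assume $Q$ is a nontrivial projection. Its range is a closed subspace, and either it is a model space $K^{2}_{\theta}$ for some inner function $\theta$, or it is not. If the range is $K^{2}_{\theta}$, then the observation above forces $Q = I - T_{\theta}T_{\bar{\theta}}$, so Lemma \ref{K2} applies and delivers $\varphi = a\theta + b\bar{\theta} + c$ with $|a|^{2}-|b|^{2}=1$, which is conclusion (1). If the range is not a model space, then Lemma \ref{SC} applies and yields $\varphi = uv + \bar{v} + c$ with $u$ inner, $c$ constant, and $v \in H^{2}$ satisfying $|v|^{2}-1 \in uH^{2}+\overline{uH^{2}}$; Lemma \ref{SCC} then rewrites this membership condition as $|v|^{2} = \operatorname{Re}(uh+1)$ for some $h \in H^{2}$, which is conclusion (2). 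Since the two alternatives are logically exhaustive, one of (1) and (2) must hold.

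For the reverse implication I would run the same lemmas in the opposite direction. If $\varphi$ has the form in (1), the converse half of Lemma \ref{K2} gives $Q = I - T_{\theta}T_{\bar{\theta}}$, a nontrivial projection whose range is the model space $K^{2}_{\theta}$. If $\varphi$ has the form in (2), Lemma \ref{SCC} first converts $|v|^{2} = \operatorname{Re}(uh+1)$ back into $|v|^{2}-1 \in uH^{2}+\overline{uH^{2}}$, and then the converse half of Lemma \ref{SC} shows that $Q$ is a nontrivial projection whose range is not a model space. In either case $Q$ is a nontrivial projection, which completes the equivalence.

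Because all of the substantive analysis already resides in Lemmas \ref{K2}, \ref{SC}, and \ref{SCC}, the only step demanding genuine attention at this level is the bookkeeping of the dichotomy: I must verify that the hypotheses ``range is a model space'' and ``range is not a model space'' are handled by disjoint cases that together exhaust every nontrivial projection, and that the two descriptions of $\varphi$ remain consistent across the boundary so that nothing is double-counted or omitted. The harder structural work — extracting the inner function $u$ and establishing the truncated Toeplitz identities in the non-model-space case — has already been carried out in the proof of Lemma \ref{SC}, so at the level of Theorem \ref{main3} there remains no real analytic obstacle, only the synthesis described above.
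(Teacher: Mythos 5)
Your proposal is correct and follows essentially the same route as the paper, which proves Theorem \ref{main3} simply by observing that it summarizes Lemma \ref{K2}, Lemma \ref{SC}, and Lemma \ref{SCC}. Your explicit remark that a projection coincides with the orthogonal projection onto its range (so that the dichotomy on the range feeds cleanly into Lemma \ref{K2} versus Lemma \ref{SC}) is exactly the implicit glue the paper relies on.
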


\section{Further discussion}
Now we study the $C^{*}-$algebra $\mathcal{T}_{u}$ generated by $\{T_{u}T_{\bar{u}}: u \text{ is an inner function}\}.$  Since the symbol mapping of every element in $\mathcal{T}_{u}$ is constant,$\mathcal{T}_{u}$
is a proper subalgebra of $\mathfrak{T}_{L^{\infty}}.$
The following theorem will give some information of the structure of $\mathcal{T}_{u}.$
\begin{theorem}
$\mathcal{T}_{u}$ is irreducible and contains all compact operators.
\end{theorem}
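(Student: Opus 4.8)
The plan is to verify the two hallmarks of the conclusion separately---that $\mathcal{T}_u$ meets the ideal of compact operators nontrivially, and that $\mathcal{T}_u$ acts irreducibly on $H^2$---and then to invoke the classical principle that a $C^{*}$-algebra acting irreducibly on a Hilbert space and containing a nonzero compact operator must contain the entire ideal $\mathcal{K}(H^2)$ (the principle underlying the analysis of the Toeplitz algebra in \cite{douglas2012banach}). Throughout I use that for inner $u$ the generator $T_u T_{\bar u}$ is precisely the orthogonal projection $P_{uH^2}$ of $H^2$ onto $uH^2$, exactly as recorded in the proof of Theorem \ref{main1}.

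For the compact operator, I would start from the identity $T_{z^n}T_{\bar z^n}=P_{z^nH^2}=I-\sum_{k=0}^{n-1} z^k\otimes z^k$. Since a $C^{*}$-algebra is closed under subtraction, $\mathcal{T}_u$ contains $T_{z^n}T_{\bar z^n}-T_{z^{n+1}}T_{\bar z^{n+1}}=z^n\otimes z^n$ for every $n\ge 1$. Each of these is a nonzero rank-one projection, so $\mathcal{T}_u\cap\mathcal{K}(H^2)\neq\{0\}$, and in fact $z^n\otimes z^n\in\mathcal{T}_u$ for all $n\ge 1$.

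The heart of the argument is irreducibility, which I would establish by showing that the commutant satisfies $\mathcal{T}_u'=\mathbb{C}I$. Let $T$ commute with every generator, i.e.\ with every $P_{uH^2}$; then $T$ reduces both $uH^2$ and $K^2_u=H^2\ominus uH^2$ for each inner $u$. Applying this with $u=z^n$ shows that $T$ reduces every $z^nH^2$ ($n\ge 1$), and hence reduces each one-dimensional space $z^nH^2\ominus z^{n+1}H^2=\mathbb{C}z^n$ ($n\ge 0$); consequently every monomial is an eigenvector, $Tz^n=\lambda_n z^n$, so $T$ is diagonal in the basis $\{z^n\}_{n\ge 0}$. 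To force the $\lambda_n$ to coincide I would feed in a single Blaschke factor $b_a$ with $a\neq 0$, whose model space $K^2_{b_a}$ is the one-dimensional space spanned by the reproducing kernel $k_a(z)=\frac{1}{1-\bar a z}=\sum_{n\ge 0}\bar a^{\,n}z^n$. Since $T$ reduces $K^2_{b_a}$, the vector $k_a$ is an eigenvector, and comparing coefficients in $Tk_a=\sum_n\lambda_n\bar a^{\,n}z^n=\mu\,k_a$ gives $\lambda_n=\mu$ for all $n$ (as $\bar a^{\,n}\neq 0$). Thus $T=\mu I$, the commutant is trivial, and $\mathcal{T}_u$ is irreducible.

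Finally, combining irreducibility with the nonzero compact operator produced above, the classical theorem yields $\mathcal{K}(H^2)\subseteq\mathcal{T}_u$, completing the proof. I expect the main obstacle to be the irreducibility step: the delicate point is to recognize that invariance under all the shift-invariant subspaces $z^nH^2$ only diagonalizes a commuting operator, so that one genuinely needs a non-monomial model space---supplied by a single Blaschke factor and its reproducing kernel---to collapse the diagonal to a scalar. Alternatively, one could try to show directly that $\mathcal{T}_u$ contains every rank-one operator $z^m\otimes z^n$, but extracting the off-diagonal ones appears more laborious than appealing to the general principle.
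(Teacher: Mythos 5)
Your proof is correct, but the irreducibility step runs along a genuinely different track from the paper's. The paper argues by contradiction with a commuting nontrivial projection $E$: using the identity $I-k_{z}\otimes k_{z}=T_{\varphi_{z}}T_{\overline{\varphi_{z}}}$ for M\"obius factors $\varphi_{z}$, it deduces $Ek_{z}\in\{0,k_{z}\}$ for every $z\in\mathbb{D}$, splits the disk into the two corresponding sets, and uses the fact that one of them is uncountable (hence the associated kernels span a dense subspace) to force $E=0$ or $E=I$; the same identity supplies the nonzero compact operator $k_{z}\otimes k_{z}$. You instead compute the full commutant: the generators with $u=z^{n}$ diagonalize any commuting $T$ in the monomial basis, and then a single nontrivial Blaschke factor $b_{a}$ ($a\neq0$), whose model space is spanned by $k_{a}=\sum_{n}\bar{a}^{n}z^{n}$, collapses the diagonal to a scalar because all Taylor coefficients of $k_{a}$ are nonzero. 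Your route avoids the uncountability/density argument entirely, treats arbitrary commutant elements rather than only projections, and produces the compact operator $z^{n}\otimes z^{n}$ by the elementary telescoping $T_{z^{n}}T_{\bar{z}^{n}}-T_{z^{n+1}}T_{\bar{z}^{n+1}}$; the paper's argument, by contrast, gets by with a single family of generators (the M\"obius factors) and makes the geometric role of the reproducing kernels more visible. Both correctly finish by the standard fact that an irreducible $C^{*}$-algebra containing one nonzero compact operator contains all of them.
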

\begin{proof}
Suppose that $\mathcal{T}_{u}$ is reducible. Then there exists a nontrivial projection $E$  which commutes with each $T_{u}T_{\bar{u}}$ for all inner function $u.$  If $u$ is a M$\ddot{o}$bius transform
\[u=\varphi_{z}(w)=\frac{z-w}{1-\bar{z}w},\]and $k_{z}$ denote the normalized reproducing kernel at $z:$
$k_{z}(w)=\frac{\sqrt{1-|z|^{2}}}{1-\bar{z}w}.$
We have the following identity:
\begin{align}\label{kz}
I-k_{z}\otimes k_{z}=T_{\overline{\varphi_{z}}}T_{{\varphi_{z}}},
\end{align}
the identity can be found in \cite[p.480]{zheng1996distribution}.
Hence,
\begin{align*}
E(k_{z}\otimes k_{z})&=(k_{z}\otimes k_{z})E\\
(Ek_{z})\otimes k_{z}&=k_{z}\otimes (Ek_{z})\\
\langle Ek_{z},k_{z}\rangle Ek_{z}
&=\langle Ek_{z},Ek_{z}\rangle k_{z}\\
\|Ek_{z}\|^{2}Ek_{z}
&=\|Ek_{z}\|^{2}  k_{z}.
\end{align*}
If $Ek_{z}$ is not a zero vector, we have
$Ek_{z}= k_{z}.$ Thus unit disc $\mathbb{D}$ is the disjoint union of two sets, say $\mathbb{D}=\Sigma_{1}\cup\Sigma_{2},$ where
$\Sigma_{1}=\{z\in \mathbb{D}:Ek_{z}=0\}$ and
$\Sigma_{2}=\{z\in \mathbb{D}:Ek_{z}=k_{z}\}.$
So, at least one of $\Sigma_{1}$ and $\Sigma_{2}$ is
an uncountable set. at least of $\{k_{z}:z \in \Sigma_{1}\}$ and
$\{k_{z}:z \in \Sigma_{2}\}$ is dense in $H^{2}.$  Hence,
$E$ is zero operator or identical operator, which is a contradiction.
Using \eqref{kz}, we have $\mathcal{T}_{u}$ contains at least one nonzero compact operator. By \cite[5,39]{douglas2012banach}, $\mathcal{T}_{u}$ contains all compact operators.
\end{proof}

\bigskip \noindent{\bf Acknowledgement}.


\end{document}